\newtheorem {theorem}{Theorem}[section]
\newtheorem {proposition}[theorem]{Proposition}
\newtheorem {lemma}[theorem]{Lemma}
\newtheorem {corollary}[theorem]{Corollary}
\newtheorem {definition}[theorem]{Definition}
\newtheorem {example}[theorem]{Example}
\newtheorem {remark}[theorem]{Remark}
\def\Limsup{\mathop{{\rm Lim}\,{\rm sup}}}
\def\Sol{\mbox{\rm Sol}\,}
\def\R{\mathbb{R}}
\def\N{\mathbb{N}}
\title{Weak sharp minima at infinity and solution stability in mathematical programming via asymptotic analysis}
\author{Felipe Lara$^1$}
\address {$^1$Instituto de Alta Investigaci\'on (IAI), Universidad de Tarapac\'a, 
Arica, Chile. Web: www.felipelara.cl} 
\email{felipelaraobreque@gmail.com;  flarao@uta.cl}
\author{Nguyen Van Tuyen$^2$}
\address{$^2$Department of Mathematics, Hanoi Pedagogical University 2, Xuan Hoa, Phuc Yen, Vinh Phuc, Vietnam}
\email{nguyenvantuyen83@hpu2.edu.vn; tuyensp2@yahoo.com}
\author{Tran Van Nghi$^3$}
\address{$^3$Department of Mathematics, Hanoi Pedagogical University 2, Xuan Hoa, Phuc Yen, Vinh Phuc, Vietnam}
\email{tranvannghi@hpu2.edu.vn}
\thanks{ }
\date{\today}
\keywords{Weak sharp minima at infinity, solution stability, asymptotic cone, asymptotic function,  linear perturbation}
\subjclass[2020]{90C31 $\cdot$ 90C26 $\cdot$ 49K40 $\cdot$ 49J52 $\cdot$ 49J53 $\cdot$ 49K30  }
\begin{document}
	
\maketitle
	
\begin{abstract}
 We develop sufficient conditions for the existence of the weak sharp minima 
 at infinity property for nonsmooth optimization problems via asymptotic cones 
 and generalized asymptotic functions. Next, we  show that these 
 conditions are also useful for studying the solution stability of nonconvex 
 optimization problems under linear perturbations.  Finally, we provide applications for a subclass of quasiconvex functions which 
 is stable under linear additivity and  includes the convex ones.
\end{abstract}

\section{Introduction}

The notion of asymptotic (or recession) directions of an unbounded set has 
been introduced in order to study its behavior at infinity more than 100 years 
ago in the series of papers \cite{Stein} and then rediscovered in the 1950's 
in connection with some applications in economics \cite{D-1959}. Subsequently, 
the study of asymptotic directions was pursued during decades and the concept 
was developed both for convex and nonconvex sets, and then extended to 
infinite dimensional spaces, too (see 
\cite{Ait,Amara,att-but,Aus1,Aus2,AT,luc89,luc-pen,rock,rock-wet}).

A notion related to the asymptotic cone of the epigraph of the function is the 
so-called asymptotic function. A careful analysis of the behavior of the 
asymptotic function associated to the objective function of the optimization 
problem, along the asymptotic directions of the feasible set, is crucial for 
determining the existence of minimizers (see \cite{AT} for a great account in 
the convex case). But when dealing with nonconvex sets and functions, 
the usual notions of asymptotic cone and function does not provide adequate
information on the asymptotic directions and the level sets of the original 
function, for these reasons, the authors in \cite{Amara,luc89,luc-pen,Penot} 
developed different notions for dealing with nonconvex sets while in 
\cite{Amara,FFB-Vera,HLM,HLL,IL-1,Lara-4,Penot} the authors 
developed different notions for dealing with nonconvex (quasiconvex) functions. 
As a consequence, several notions were introduced, but one of the most 
useful ones in the quasiconvex case is the  so-called {\it $q$-asymptotic 
function}, introduced in \cite{FFB-Vera} (see also \cite{HLM}).

Generalized asymptotic sets and functions have been developed deeply in the 
recent years, and they have been proved to be useful in several nonconvex
optimization problems as, for instance, for developing existence results for 
noncoercive minimization problems (see \cite{Aus1,AT,FFB-Vera,IL-1,Lara-4}),
variational inequalities (see  \cite{AT,IL-4}), equilibrium problems (see 
\cite{Ait,ffb1,IL-3}) and vector optimization problems (see 
\cite{Lara-1,ffb2,F-L-Vera,luc89}) among others.

In this paper, and motivated by very recent developments on different 
theoretical tools for studying minimization problems at infinity (see 
\cite{Kim-Tung-Son-23,Kim-Tung-Son-Tuyen-23,Tung-Son,Tuyen,TBK}), we study weak sharp minima at infinity and 
solution stability for optimization problems via asymptotic analysis. In particular, 
we develop sufficient conditions for weak sharp minima at infinity as well as for 
solution stability under linear perturbations in the general nonconvex case and, 
also, we study continuity properties for the solution map and the optimal value 
function. Furthermore, and by using the $q$-asymptotic function for the 
particular case when the objective function is quasiconvex (i.e., including the 
convex case), we develop even finer sufficient conditions for both mentioned 
problems. Moreover, we prove that our assumptions are weaker than the ones 
used in \cite{Kim-Tung-Son-23} (for weak sharp minima at infinity) for a class 
of nonconvex functions which includes the convex ones, confirming once again 
the importance of the use of asymptotic tools in the study of nonconvex 
optimization problems and, specially, when we are searching for useful 
information from the infinity.

The paper is organized as follows. In Section \ref{section2} we review some 
standard facts on generalized convexity, generalized asymptotic cones and its 
associated asymptotic functions and set-valued mappings. In Section 
\ref{section3}, we provide a finer sufficient condition for characterizing weak 
sharp minima at the infinity. In Section \ref{section4}, we study solution stability 
as well as continuity properties for the solution map and the optimal value 
function. Finally, in Section \ref{section5}, we apply our previous results to the
quasiconvex case by using the $q$-asymptotic function.

\section{Preliminaries and Basic Definitions}\label{section2}

Throughout the paper, the  space $\R^n$ is equipped with the usual scalar 
product $\langle \cdot, \cdot\rangle$ and the corresponding  Euclidean norm 
$\|\cdot\|$. We use the notation  $\mathbb{B}_\delta$ to represent the open 
ball centered at the origin with a radius of $\delta>0.$   The set of all positive integer numbers is denoted by $\N$.
	
Given any function $f:\mathbb{R}^{n}\rightarrow \overline{\mathbb{R}} :=
\mathbb{R}\cup \{ \pm \infty \}$, the effective domain of $f$ is defined by
$$\operatorname{dom}f:=\{x\in \mathbb{R}^{n}:f(x)<+\infty \}.$$ 
We say that $f$ is proper if $f(x)>-\infty$ for every $x\in \mathbb{R}^{n}$ and
$\operatorname{dom}f$ is nonempty. For a function $f$, we adopt the usual
convention $\inf \nolimits_{\emptyset}f:=+\infty$ and $\sup \nolimits_{\emptyset} 
f := -\infty$.
	
	We denote by 
	$$\mathrm{epi}f:=\{(x,t)\in \mathrm{dom}f\times \mathbb{R}:\,f(x)\leq
	t\}$$ 
	its epigraph and for a given $\lambda \in \mathbb{R}$ by 
	$$S_{\lambda}(f) := \{x\in \mathbb{R}^{n}:~f(x)\leq \lambda \}$$ 
	its sublevel set at value
	$\lambda$. As usual,
	$$\mathrm{argmin}_{X}f:=\{x\in X:~f(x)\leq f(y)\ \ \forall y\in X\}.$$
	A proper function $f$ is said to be:
	\begin{itemize}
		\item[$(a)$] {\em convex}  if its domain is convex and for every $x, y \in {\rm dom}\,f,$
		$$f(\lambda x+(1-\lambda)y) \leq \lambda f(x) + (1-\lambda) f(y) \ \ \forall   \lambda \in [0, 1].$$

		\item[$(b)$] {\em quasiconvex} if for every $x, y \in {\rm dom}\,f$,
		$$f(\lambda x + (1 - \lambda)y) \leq \max\{f(x), f(y)\}\ \ \forall  \lambda \in [0,1].$$	
	\end{itemize}
	
	Every convex function is quasiconvex, but the converse statement does not hold as the continuous function $f:\mathbb{R} \rightarrow \mathbb{R}$ with 
	$f(x) := \min \{ \lvert x \rvert,1\}$ shows. Recall that
	\begin{align*}
		f~\mathrm{is~convex} &  \Longleftrightarrow \operatorname{epi}%
		f~\mathrm{is~a~convex~set;}\\
		f~\mathrm{is~quasiconvex} &  \Longleftrightarrow S_{\lambda}%
		(f)~\mathrm{is~a~convex~set,~for~all~}\lambda \in \mathbb{R}.
	\end{align*}
	
As it is well-known, quasiconvex functions are not closed for the sum, i.e., the sum 
of quasiconvex functions is not necessarily quasiconvex. For this reason, the authors 
in \cite{PA} have introduced the following subclass which is closed under addition 
with linear functions.

\begin{definition}[{see \cite{BGJ,PA}}]\label{alpha:robust}   \rm
 For $\alpha \geq 0$, a proper function $f: \mathbb{R}^{n} \rightarrow
 \overline{\mathbb{R}}$ is said to be {\it $\alpha$-robustly quasiconvex} if the 
 function $x \mapsto f(x) + \langle u, x \rangle$ is quasiconvex for all $u \in
 \mathbb{B}_{\alpha}$. 
\end{definition}
Note that every convex function is $\alpha$-robustly quasiconvex for all 
$\alpha \geq 0$, but the converse statements does not holds (see \cite[p. 
1091]{BGJ}). An important property of $\alpha$-robustly quasiconvex functions 
is that every local minimum is global, but $\alpha$-robustly quasiconvex 
functions are different from semistrictly quasiconvex ones (see 
\cite[p. 1091]{BGJ}). For a further study we refer to \cite{BGJ,PA,PA-99}.

As explained in \cite{AT}, the notions of asymptotic cone and the associated 
asymptotic function have been employed in optimization theory in order to handle
unbounded and/or nonsmooth situations, in particular when standard compactness
hypotheses are absent. We recall some basic definitions and properties of 
asymptotic cones and functions, which can be found in \cite{AT}.
	
%\begin{definition}[see \cite{AT}]\rm 
 For a nonempty set $X \subset \mathbb{R}^{n}$, the {\em asymptotic cone} 
 of $X$, denoted by $X^\infty$, is the set defined by
 $$X^{\infty}:=\left \{  u\in \mathbb{R}^{n}:~\exists~t_{k}\rightarrow +
 \infty, ~ \exists~x_{k}\in X,~\frac{x_{k}}{t_{k}}\rightarrow u\right \} .$$
 We adopt the convention that $\emptyset^{\infty}=\emptyset$.    
%\end{definition}	

We note here that when $X$ is a closed and convex set, its asymptotic cone is 
equal to 
\begin{equation*}\label{A1_convex} 
		X^{\infty}=\Big \{u\in \mathbb{R}^{n}:~x_{0}+\lambda u\in
		X\ \ \forall~\lambda \geq0\Big \} \, \text{ for any } x_{0}\in X,
	\end{equation*}
	see \cite[Proposition 2.1.5]{AT}.	

%\begin{definition}[see \cite{AT}]\rm 
 Let $f\colon\mathbb{R}^n\to\overline{\mathbb{R}}$ be a proper function. The 
 {\em asymptotic function} $f^{\infty}: \mathbb{R}^{n}\rightarrow
 \overline{\mathbb{R}}$ of   $f$ is the function for which
\begin{equation*}
 \operatorname{epi}f^{\infty}:=(\operatorname{epi}f)^{\infty}.\label{def:usual}  
\end{equation*}   
%\end{definition}
	From this, one may show that
	\begin{equation*}\label{usual:formulas}
		f^{\infty}(d)=\inf \left \{  \liminf_{k\rightarrow+\infty}\frac{f(t_{k}d_{k})}{t_{k}}:~t_{k}\rightarrow+\infty,~d_{k}\rightarrow d\right \} .
	\end{equation*}
Moreover, when $f$ is lower semicontinuous (lsc henceforth) and convex, then we have
	\begin{equation}
		f^{\infty}(u)=\sup_{t>0}\frac{f(x_{0}+tu)-f(x_{0})}{t}=\lim_{t\rightarrow
			+\infty}\frac{f(x_{0}+tu)-f(x_{0})}{t} \ \ \forall x_{0} \in \operatorname{dom}f, \label{usual:convex} 
	\end{equation}
	see \cite[Proposition 2.5.2]{AT}.	
	
	A function $f\colon\mathbb{R}^n\to\overline{\mathbb{R}}$ is called {\em coercive} on a subset $X\subset\mathbb{R}^n$ if 
	\begin{equation*}
		\lim_{x \overset{X}{\longrightarrow} \, \infty} f(x) = + \infty,
	\end{equation*}
where $x \xrightarrow{X} \infty$ means that $\|x\|\to \infty$ and $x\in X$.	We say that $f$ is coercive if it is coercive on $\mathbb{R}^n$. We know that, if $f^{\infty}(u)>0$ for all $u\neq0$, then $f$ is coercive. Furthermore, if $f$ is convex and lsc, then 
	\begin{equation*}
		f\mathrm{~is~coercive}\Longleftrightarrow f^{\infty}(u)>0\ \ \forall
		 u\neq 0\Longleftrightarrow \mathrm{argmin}_{\mathbb{R}^{n}}~f \ \ \text{is nonempty and compact},\label{char:convex}%
	\end{equation*}
	see \cite[Proposition 3.1.3]{AT}.
	
When $f$ is nonconvex, the asymptotic function $f^{\infty}$ is not good enough 
for providing information on the behavior of $f$. For this reason, several authors 
have been proposed different notions for dealing with, specially, quasiconvex 
functions, see 
	\cite{FFB-Vera,HLM,HLL,IL-1,Lara-4}. 
	\begin{definition}\rm 
		Given a proper function $f: \mathbb{R}^{n} \rightarrow \overline{\mathbb{R}}$, we consider:
		\begin{enumerate}[$(i)$]
			\item (see \cite{FFB-Vera,HLM}) The {\em $q$-asymptotic function}   of $f$ is the function $f^{\infty}_{q}: \mathbb{R}^{n} \rightarrow
			\overline{\mathbb{R}}$ given by:
			\begin{equation*} 
				f^{\infty}_{q} (u) := \sup_{x \in {\rm dom}\,f} \sup_{t>0} \frac{f(x+tu) - f(x)}{t}\ \ \forall ~ u \in \mathbb{R}^{n}.
			\end{equation*}
			\item (see \cite{Lara-4})  The {\em sublevel asymptotic function}  of $f$ at the height $\lambda \in \mathbb{R}$, with $S_{\lambda} (f) \neq \emptyset$, is the
			function $f^{\infty}_{\lambda}: \mathbb{R}^{n} \rightarrow 
			\overline{\mathbb{R}}$ given by:
			\begin{equation*} 
				f^{\infty}_{\lambda} (u) := \sup_{x \in S_{\lambda} (f)} \sup_{t>0} \frac{f(x+tu) - f(x)}{t}\ \ \forall ~ u \in \mathbb{R}^{n}.
			\end{equation*}
		\end{enumerate}   
	\end{definition}	
	
If $f$ is lsc and quasiconvex, then, by \cite[Theorem 4.7]{FFB-Vera}, we have
	\begin{equation*}\label{qasympt:char} f_{q}^{\infty}(u)>0\ \ \forall u\neq0~\Longleftrightarrow
		~\mathrm{argmin}_{\mathbb{R}^{n}}~f\ \ \text{is nonempty and compact},%
	\end{equation*}
	and, by \cite[Theorem 3.1]{Lara-4}, for any $\lambda\in\mathbb{R}$ with $S_{\lambda} (f) \neq \emptyset$  we have 
	\begin{equation*}\label{lambda:char} 
		f^{\infty}_{\lambda} (u) > 0 \ \ \forall   u \neq 0 ~ \Longleftrightarrow ~ ~\mathrm{argmin}_{\mathbb{R}^{n}}~f\ \ \text{is nonempty and compact}.%
	\end{equation*}
	
	If $f$ is quasiconvex (resp. lsc), then $f^{q} (\cdot)$ and $f^{\infty}_{\lambda}
	(\cdot)$ are quasiconvex (resp. lsc). Furthermore, the following
	relations hold for any $\lambda \in \mathbb{R}$ with $S_{\lambda}(f) \neq \emptyset$,
	\begin{equation*} 
		f^{\infty}(\cdot) \leq f^{\infty}_{\lambda} (\cdot) \leq f_{q}^{\infty}(\cdot).
	\end{equation*}
	Both inequalities could be strict even for quasiconvex functions, see, for example,  \cite{Lara-4}. 
	
	We now recall  definitions of the upper and lower semicontinuity to set-valued mappings. 	
	\begin{definition}[see \cite{AT}]\rm 
		Let  $F : \mathbb{R}^n \rightrightarrows \mathbb{R}^m$ be a set-valued mapping. Then, $F $ is said to be:
		\begin{enumerate}[$(i)$]
 \item {\em  upper semicontinuous} (usc henceforth) at $\bar{x}$ if, for any open 
 set $V \subset \mathbb{R}^m$ such that  $F(\bar{x}) \subset V$ there exists a neighborhood $U$ of $\bar{x}$ in $X$ such that  $F(x) \subset V$ for all $x\in U$;
			\item {\em lower semicontinuous} (lsc) at $\bar x$ if $F(\bar x) \neq \emptyset$ and if, for any open set $V \subset \mathbb{R}^m$ such that $F(\bar x) \cap V \neq \emptyset$ there exists a neighborhood $U$ of $\bar x$  such that $F(x) \cap V \neq \emptyset$ for all $x \in U$;
			\item {\em continuous} at $\bar x$ if it is both usc and lsc at this point.
		\end{enumerate}
	\end{definition}	
	
For a further study on generalized convexity and asymptotic analysis we refer to \cite{Amara,att-but,Aus1,Aus2,AT,BGJ,Cambi,ffb1,ffb2,FFB-Vera,HKS,HLL,HLM,HL,IL-1,IL-3,IL-4,IL-5,Lara-1,Lara-4,lara-lopez,luc-pen,PA,Rele-Nedic-24,rock,rock-wet} and references therein.

\section{Weak Sharp Minima at Infinity}\label{section3}
	
Let us consider the following optimization problem
\begin{equation}\label{problem}
 \inf_{x\in X} f(x), \tag{$P$}
\end{equation}
where $f\colon\R^n \to\overline{\R}$ is assumed to be a proper lsc function 
and $X \subset \R^n$ is a closed set such that $\mathrm{dom}\, f\cap X$ is
unbounded during the whole paper.
	
\begin{theorem}[The coercivity and the weak sharp minima property at infinity]\label{weak-sharp-Thrm}  
 Assume that 
		\begin{equation}\label{equa-CQ}
			X^{\infty} \cap \mathcal{K}(f)=\{0\},
		\end{equation}
		where $\mathcal{K}(f):=\{d\in\mathbb{R}^n\,:\, f^\infty(d)\leq 0\}$. 	Then the following statements hold:
		\begin{enumerate}
			\item[$(a)$] Problem \eqref{problem} has a finite optimal value and $\mathrm{Sol}\,\eqref{problem}$ is nonempty and compact.
			
			\item[$(b)$] Problem \eqref{problem} has a weak sharp minima at infinity, i.e., there exist constants $c>0$ and $R>0$ such that
			\begin{equation*}
				f(x)-f_{*} \geq c \, \mathrm{dist}(x,\mathrm{Sol}\,\eqref{problem}) \ \ \forall   x \in X \setminus \mathbb{B}_R,
			\end{equation*}
			where $f_{*} :=\inf_{x\in X} f(x)$ and $\mathrm{dist}(x,\mathrm{Sol}\,\eqref{problem})$ stands for the distance from $x$ to $\mathrm{Sol}\,\eqref{problem}$.
			
			\item[$(c)$] $f$ is coercive on $X$.
		\end{enumerate}
	\end{theorem}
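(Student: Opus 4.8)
The plan is to deduce all three statements from a single observation: \emph{under \eqref{equa-CQ} there is no sequence $x_k \in \operatorname{dom}f \cap X$ with $\|x_k\| \to +\infty$ for which $\liminf_k f(x_k)/\|x_k\| \le 0$.} To prove this observation I would suppose such a sequence exists and, passing to a subsequence, assume that $f(x_k)/\|x_k\|$ converges to some $\ell \in [-\infty,0]$ and that $x_k/\|x_k\| \to d$ with $\|d\| = 1$. Since $x_k \in X$, the definition of the asymptotic cone gives $d \in X^\infty$. Setting $t_k := \|x_k\| \to +\infty$ and $d_k := x_k/t_k \to d$, the formula for $f^\infty$ recalled in Section \ref{section2} yields $f^\infty(d) \le \liminf_k f(t_k d_k)/t_k = \ell \le 0$, so $d \in \mathcal{K}(f)$. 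Hence $d$ is a nonzero element of $X^\infty \cap \mathcal{K}(f)$, contradicting \eqref{equa-CQ}.

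Granting the observation, part $(c)$ is immediate: were $f$ not coercive on $X$, there would exist $M \in \R$ and $x_k \in X$ with $\|x_k\| \to +\infty$ and $f(x_k) \le M$; properness gives $x_k \in \operatorname{dom}f$, and from $f(x_k)/\|x_k\| \le M/\|x_k\|$ we get $\liminf_k f(x_k)/\|x_k\| \le 0$, contradicting the observation. Part $(a)$ then follows by a Weierstrass argument made available by $(c)$: $\operatorname{dom}f \cap X$ is nonempty (indeed unbounded) so $f_* < +\infty$; any minimizing sequence is bounded by coercivity on $X$, hence has a cluster point which lies in $X$ (closed) and is a minimizer (lower semicontinuity of $f$), so $\mathrm{Sol}\,\eqref{problem} \ne \emptyset$ and $f_* \in \R$ by properness; finally $\mathrm{Sol}\,\eqref{problem} = X \cap S_{f_*}(f)$ is closed and, again by coercivity on $X$, bounded, hence compact.

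For part $(b)$ I would argue by contradiction. By $(a)$ fix $s_0 \in \mathrm{Sol}\,\eqref{problem}$ and $R_0 > 0$ with $\mathrm{Sol}\,\eqref{problem} \subset \mathbb{B}_{R_0}$. If the weak sharp minima property at infinity failed, then for every $k \in \N$ there would be $x_k \in X$ with $\|x_k\| \ge k$ and $f(x_k) - f_* < \tfrac1k \, \mathrm{dist}(x_k, \mathrm{Sol}\,\eqref{problem})$; the right-hand side being finite forces $x_k \in \operatorname{dom}f$, and since $f(x_k) \ge f_*$,
\[
0 \le f(x_k) - f_* < \tfrac1k \, \|x_k - s_0\| \le \tfrac1k \bigl( \|x_k\| + R_0 \bigr).
\]
Dividing by $\|x_k\| \to +\infty$ gives $f(x_k)/\|x_k\| < f_*/\|x_k\| + \tfrac1k \bigl(1 + R_0/\|x_k\|\bigr) \to 0$, so $\liminf_k f(x_k)/\|x_k\| \le 0$, again contradicting the observation.

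I expect the only genuinely delicate point to be this last step of $(b)$: the observation requires dividing $x_k$ by $\|x_k\|$, whereas the negated conclusion controls $f(x_k) - f_*$ only through $\mathrm{dist}(x_k, \mathrm{Sol}\,\eqref{problem})$. The bridge is the trivial estimate $\mathrm{dist}(x_k, \mathrm{Sol}\,\eqref{problem}) \le \|x_k\| + R_0$, legitimate precisely because $\mathrm{Sol}\,\eqref{problem}$ is bounded by $(a)$; it shows the two normalizations agree up to a factor tending to $1$. Everything else — the extraction of subsequences, the use of the formula for $f^\infty$, and the Weierstrass argument for $(a)$ — is routine.
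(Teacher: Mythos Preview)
Your proof is correct and follows essentially the same line as the paper. The paper cites an external reference for $(a)$, proves $(b)$ by the same contradiction argument you give (extract a unit direction $d\in X^\infty$ from $x_k/\|x_k\|$, use the formula for $f^\infty$ to get $f^\infty(d)\le 0$, and bound $\mathrm{dist}(x_k,\mathrm{Sol})\le\|x_k\|+M$ via compactness of $\mathrm{Sol}$), and derives $(c)$ from $(a)$ and $(b)$; you instead isolate the key step as a standalone observation, obtain $(c)$ directly from it, recover $(a)$ self-containedly by Weierstrass, and then reuse the observation for $(b)$. The reorganization is tidier and avoids the external citation, but the mathematical content is the same.
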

	
	\begin{proof}
		$(a)$: See  \cite[Theorem 4.2.1]{Rele-Nedic-24}.
		
		$(b)$: By contradiction, assume that there exists a sequence $x_k\in X$ such that $\|x_k\|\to\infty$ and
		\begin{equation}\label{equa-3}
			0 \leq f(x_k)-f_*<\frac{1}{k}\mathrm{dist}(x_k,\mathrm{Sol}\,\eqref{problem}) \ \ \forall k\in\N.
		\end{equation}
		Since $\|x_k\|\to\infty$, by passing to subsequences if necessary we may assume that $\frac{x_k}{\|x_k\|}$ converges to some $d\in\R^n$. Clearly, $d\in X^{\infty}$ and  $\|d\|=1$.  By \eqref{equa-CQ},  
		\begin{equation}\label{equa-1}
			f^\infty (d) = \inf\left\{\liminf_{k\to\infty}\frac{f(t_kd_k)}{t_k}\;:\; t_k\to\infty, d_k\to d\right\}>0.
		\end{equation}
		Put $t_k:=\|x_k\|$ and $d_k:=\frac{x_k}{\|x_k\|}$. Then it follows from \eqref{equa-1} that
		\begin{equation}\label{equa-2}
			\gamma:=\liminf_{k\to\infty}\frac{f(t_kd_k)}{t_k}>0.
		\end{equation}
		Let $\varepsilon\in(0, \gamma)$. Then the relation \eqref{equa-2} implies that there exists $k_0>0$ such that
		\begin{equation*}
			\frac{f(t_kd_k)}{t_k}>\gamma-\varepsilon \ \ \forall k\geq k_0,
		\end{equation*}
		or, equivalently,
		\begin{equation*}
			f(x_k)>(\gamma-\varepsilon)\|x_k\|\ \ \forall k\geq k_0.
		\end{equation*}
		This together with \eqref{equa-3} imply that
		\begin{equation*}
			(\gamma-\varepsilon)\|x_k\|-f_*< f(x_k)-f_*<\frac{1}{k}\mathrm{dist}(x_k,\mathrm{Sol}\,\eqref{problem}) \ \  \forall k>k_0.
		\end{equation*}
		Let $z_k\in \mathrm{Sol}\,\eqref{problem}$ be satisfied $\|x_k-z_k\|=\mathrm{dist}(x_k,\mathrm{Sol}\,\eqref{problem})$. Then, by the compactness of $\mathrm{Sol}\,\eqref{problem}$, we have
		\begin{equation*}
			\|x_k-z_k\|\leq \|x_k\|+\|z_k\|\leq \|x_k\|+M,
		\end{equation*}
		where $M:=\max\{\|z\|\;:\; z\in \mathrm{Sol}\,\eqref{problem}\}$. Hence
		\begin{equation}\label{equa-4}
			(\gamma-\varepsilon)\|x_k\|-f_*<\frac{1}{k}(\|x_k\|+M) \ \  \forall k>k_0.
		\end{equation}
		By dividing both sides of \eqref{equa-4} by $\|x_k\|$ and letting
		$k\to\infty$, we get $\gamma\leq \varepsilon$, a contradiction. 
		
		$(c)$: This is a direct consequence of $(a)$ and $(b)$. 
	\end{proof}

Now, we present a result in order to compare with \cite[Theorem 
6.4]{Kim-Tung-Son-23}. To that end, we need to recall some definitions from 
variational analysis at infinity that were introduced in \cite{Kim-Tung-Son-23}. 

\begin{definition}[{see \cite{Kim-Tung-Son-23}}]\label{def31} {\rm 
 The {\em norm cone to the set $X$ at infinity} is defined by
 \begin{eqnarray*}
  N(\infty; X) &:=& \Limsup_{ x \xrightarrow{X} \infty} \widehat{N}(x; X),
 \end{eqnarray*} 
where ``$\displaystyle \Limsup$'' is the sequential  upper/outer limit in the sense of Painlev\'e--Kuratowski, i.e., $u\in  N(\infty; X)$ if and only if there exist sequences 
$x_k \in \R^n$ and $u_k\in\R^n$ such that $u_k\in \widehat{N}(x_k; X)$ for all $k\in\N$ and $\|x_k\|\to \infty$, $u_k\to u$ as 
$k\to\infty$, and $\widehat{N}(x; X)$ is  the regular/Fr\'echet normal cone to $X$ at $x$ 
 and defined by
\begin{align*}
 \widehat N(x; X)=\left\{ v\in \mathbb{R}^n\;:\; \limsup\limits_{z 
 \xrightarrow{X} x} \dfrac{\langle v, z- x \rangle}{\|z- x\|} \leq 0 \right\}.
\end{align*}
}
\end{definition} 

\begin{definition}[{see \cite{Kim-Tung-Son-23}}]\label{def41} {\rm 
 The {\em limiting/Mordukhovich subdifferential} of $f$ at infinity is defined  by 
 \begin{eqnarray*}
 \partial f(\infty) &:=& \Big\{u \in \mathbb{R}^n \ : \ (u, -1) \in \displaystyle
 \Limsup_{x \to \infty} {N}((x,f(x)); \textrm{epi} f) \Big\},
 \end{eqnarray*}
i.e.,    $u\in \partial f(\infty)$ if and only if there exist sequences 
$x_k \in \R^n$, $(u_k, v_k)\in\R^n\times \R$ such that $(u_k, v_k)\in {N}
((x_k,f(x_k)); \textrm{epi} f)$ for all $k\in\N$, and $\|x_k\|\to \infty$, $(u_k, v_k)\to (u, -1)$ as 
$k\to\infty$. 
}\end{definition}

Some properties and calculus rules of the normal cone and the subdifferential at 
infinity can be found in \cite{Kim-Tung-Son-23}. In \cite[Theorem 
6.4]{Kim-Tung-Son-23}, the authors show that if $f$ is bounded from below on 
$X$ and the following condition 
\begin{equation}\label{Son-CQ}
	0\notin \partial f(\infty)+N_X(\infty)
\end{equation}   
is satisfied, then assertions of Theorem \ref{weak-sharp-Thrm} hold. The 
following result gives us  that, in some cases, condition \eqref{equa-CQ} is weaker 
than condition \eqref{Son-CQ}.    

\begin{proposition}\label{prop:comparison2}
Assume that $X$ is a convex set and $f$ is a convex function. 
Then condition \eqref{equa-CQ} is equivalent to that the solution set $\Sol\eqref{problem}$ is nonempty and compact. Consequently, if $f$ is  bounded from below
 on $X$ and  \eqref{Son-CQ} holds, then so is
 \eqref{equa-CQ}. 
\end{proposition}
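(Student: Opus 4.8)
The plan is to establish the stated equivalence first and then read off the ``Consequently'' part. For the implication ``\eqref{equa-CQ} $\Rightarrow$ $\Sol\eqref{problem}$ is nonempty and compact'' nothing new is needed: this is exactly assertion $(a)$ of Theorem \ref{weak-sharp-Thrm}, which does not even use convexity. So the whole content of the proposition lies in the reverse implication, and this is where the convexity of $X$ and $f$ enters.

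For that reverse implication I would argue by contradiction. Assume $\Sol\eqref{problem}$ is nonempty and compact but \eqref{equa-CQ} fails, so there is $d \in X^\infty \cap \mathcal{K}(f)$ with $d \neq 0$; in particular $f^\infty(d) \le 0$. Pick $\bar x \in \Sol\eqref{problem}$; since $\mathrm{dom}\,f \cap X$ is unbounded, hence nonempty, and $f$ is proper, we have $f_* = f(\bar x) \in \R$, so $\bar x \in \mathrm{dom}\,f$. As $X$ is closed and convex and $d \in X^\infty$, the description of the asymptotic cone of a closed convex set recalled in Section \ref{section2} gives $\bar x + t d \in X$ for all $t \ge 0$. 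As $f$ is lsc and convex and $\bar x \in \mathrm{dom}\,f$, formula \eqref{usual:convex} yields
\[
0 \;\ge\; f^\infty(d) \;=\; \sup_{t>0}\frac{f(\bar x + t d)-f(\bar x)}{t},
\]
so $f(\bar x + t d) \le f(\bar x) = f_*$ for every $t>0$; since $\bar x + t d \in X$, also $f(\bar x + t d) \ge f_*$, whence $f(\bar x + t d) = f_*$ and $\bar x + t d \in \Sol\eqref{problem}$ for all $t \ge 0$. Thus $\Sol\eqref{problem}$ contains an unbounded ray, contradicting its compactness; this proves \eqref{equa-CQ} and completes the equivalence.

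Finally, if $f$ is bounded from below on $X$ and \eqref{Son-CQ} holds, then by \cite[Theorem 6.4]{Kim-Tung-Son-23} all conclusions of Theorem \ref{weak-sharp-Thrm} hold; in particular assertion $(a)$ says that $\Sol\eqref{problem}$ is nonempty and compact, and the equivalence just proved then yields \eqref{equa-CQ}. The argument is essentially routine: the only points needing care are that $\bar x \in \mathrm{dom}\,f$ (so that \eqref{usual:convex} is applicable), which follows from properness of $f$ together with $\mathrm{dom}\,f\cap X \neq \emptyset$, and that the ray $\{\bar x + t d : t \ge 0\}$ stays in $X$, which is precisely the convex-set formula for $X^\infty$. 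I do not expect a genuine obstacle here; the substance of the proposition is the observation that, under convexity, a nonzero direction in $X^\infty \cap \mathcal{K}(f)$ forces an entire ray of minimizers.
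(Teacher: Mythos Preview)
Your proof is correct and follows essentially the same route as the paper: both directions and the ``Consequently'' part are argued exactly as in the paper, via Theorem~\ref{weak-sharp-Thrm}(a), the convex formula~\eqref{usual:convex} applied at a minimizer $\bar x$ to produce a ray of minimizers, and \cite[Theorem~6.4]{Kim-Tung-Son-23}. Your write-up is slightly more careful in justifying that $\bar x\in\mathrm{dom}\,f$ and that the ray stays in $X$, but the argument is otherwise identical.
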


\begin{proof} If \eqref{equa-CQ} is satisfied, then the nonemptiness and compactness of $\Sol\eqref{problem}$ follow from Theorem \ref{weak-sharp-Thrm}(a). We now assume that $\Sol\eqref{problem}$ is nonempty and compact, but \eqref{equa-CQ} is not satisfied, 
 i.e., there exists a nonzero vector $v \in X^\infty \cap \mathcal{K} (f)$. Fix any $\bar x \in \Sol\eqref{problem}$. Since $f$ is
 proper, lsc, convex and $v \in \mathcal{K} (f)$, we have by formula
 \eqref{usual:convex} that
\begin{equation*}
 f^{\infty} (v) = \sup_{t > 0} \frac{f(x+tv) - f(x)}{t} \leq 0\ \  \forall x \in \mathrm{dom}\,f,
\end{equation*}  
or, equivalently, 
\begin{equation*} f(x+tv) \leq f(x)\ \  \forall   x \in \mathrm{dom}\,f,    \forall   t > 0.
\end{equation*} 
Hence, $ f(\bar{x} + tv) \leq f(\bar{x})$  for all $t > 0.$  Since $X$ is convex and $v\in X^\infty$, one has $\bar x+tv\in X$ for all $t>0$. 
 Thus, $\bar x+tv\in\Sol\eqref{problem}$ for all $t>0$, contrary to the 
 compactness of $\Sol\eqref{problem}$. Therefore, condition \eqref{equa-CQ} 
 is satisfied.
 
 If $f$ is  bounded from below
 on $X$ and  \eqref{Son-CQ} holds, then,   by 
  \cite[Theorem 6.4]{Kim-Tung-Son-23}, $\Sol\eqref{problem}$   is nonempty and compact. Thus \eqref{equa-CQ} is satisfied. The proof is complete.
\end{proof}

\begin{remark}\rm 
If $f$ is an affine function and bounded from below on $X$, then   \eqref{equa-CQ} implies \eqref{Son-CQ} and thus they are  equivalent. Indeed, suppose that \eqref{equa-CQ} is satisfied and  $f(x)=c^Tx+\beta$ for all $x\in\R^n$, where   $c\in\R^n$ and $\beta\in\R$. If \eqref{Son-CQ} does not hold, then $0\in\partial f(\infty)+N_X(\infty)$. Clearly, $\partial f(\infty)=\{c\}$ and so $-c\in N_X(\infty)$. By definition, there exist sequences $x_k\in X$, $x_k^*\in N_X(x_k)$ with $\|x_k\|\to\infty$, $x_k^*\to -c$ as $k\to\infty$. Without any loss of generality, we may assume that $\frac{x_k}{\|x_k\|}\to v\in X^\infty$ with $\|v\|=1$. Since $X$ is convex and $x_k^*\in N_X(x_k)$, we have
\begin{equation*}
	\langle x^*_k, x-x_k\rangle\leq 0\ \ \forall x\in X, \forall k\in\N.
\end{equation*} 
Dividing two sides by $\|x_k\|$ and letting $k\to\infty$ we obtain
\begin{equation*}
	c^Tv=\langle -c, -v\rangle\leq 0.
\end{equation*}
Clearly, $f^\infty(v)=c^Tv$. Hence, $v\in X^\infty\cap\mathcal{K}(f)$ and 
$\|v\|=1$, contrary to \eqref{equa-CQ}. 
\end{remark}

The following simple example shows that \eqref{equa-CQ} is not equivalent 
to \eqref{Son-CQ} for the case of convex quadratic functions.

\begin{example}\rm 
Let $f\colon \R^2\to \R$ and $X\subset\R^2$ be defined, respectively, by
$$f(x):=x_2^2\ \ \forall x=(x_1, x_2)\in\R^2$$
and $X:=\{x=(x_1, x_2)\in\R^2\;:\; 0\leq x_1\leq 1, x_2\geq 0\}$. An easy computation shows that $X^\infty=\{0\}\times\R$ and, for any $v=(v_1, v_2)\in\R^2$,
\begin{equation*}
f^\infty(v)=
\begin{cases}
0, \ \ &\text{if}\ \ v_2=0,
\\
+\infty, \ \ &\text{otherwise}.
\end{cases}
\end{equation*}
Thus $X^\infty\cap\mathcal{K}(f)=\{0\}$. However, we can see that $N_X(\infty)=\R\times\{0\}$, $\partial f(\infty)=\{0\}\times\R$. Hence, 
$0 \in \partial f(\infty)+N_X(\infty)$, i.e., the condition \eqref{Son-CQ} does 
not hold. 
\end{example}

\section{Solution stability}\label{section4}

For every $u\in\R^n$, we define the function $f_u\colon\R^n\to \overline{\R}$ 
by $f_u(x):=f(x)-\langle u, x\rangle$ for all $x\in\R^n$. Consider the perturbed
optimization problem
\begin{equation}\label{problem-u}
 \inf_{x\in X} f_u(x), \tag{$P_u$}
\end{equation}
where $u$ is the parameter of perturbation. The solution set of \eqref{problem-u} 
is denoted by Sol$(u)$. When $u=0$, one has $\mathrm{Sol}\,(0)=\mathrm{Sol}\,
\eqref{problem}$. Furthermore, the function $\mu: \mathbb{R}^n \rightarrow
\overline{\mathbb{R}}$ defined by
\begin{equation*}
 \mu (u)=
 \begin{cases}
			\inf_{x\in X} f_u(x), \;\; \text{if} \;\;\; X \neq 
			\emptyset, \\
			+ \infty, \quad \quad \quad \quad ~ \text{otherwise},
 \end{cases}
\end{equation*}
is said to be the optimal value function of  $(P_{u})$.

Solution stability is an interesting and very useful research field in optimization 
(see \cite{AT,Aus2,HL,rock-wet} among others) in virtue of its applications on 
concrete applications since, in practice, we are usually finding the solution of 
the optimization problem via numerical methods.
	
Before presenting our results, note that: 
	\begin{align}
		& (f_{u})^{\infty} (y) = (f)^{\infty} (y) - \langle u, y \rangle, \label{equa-5a} \\
		& (f_{u})^{\infty}_{q} (y) = (f)^{\infty}_{q} (y) - \langle u, y \rangle. \label{equa-5q}
	\end{align}
	The proofs are similar, we just prove \eqref{equa-5a}. For every
	$y, u \in \mathbb{R}^{n}$, we have
	\begin{align*}
		(f_{u})^{\infty} (y) = \liminf\limits_{\mathop {y^\prime \to y}\limits_{t \to \infty}} \frac{f(t y^\prime) - \langle u, t y^\prime \rangle}{t} & = \liminf\limits_{\mathop {y^\prime \to y} \limits_{t \to \infty}} \frac{f(t y^\prime)}{t} + \liminf\limits_{\mathop {y^\prime \to y}\limits_{t \to \infty}} (-\langle u, y^\prime \rangle) \notag
		\\
		& = (f)^{\infty} (y) - \langle u, y \rangle.
	\end{align*}

\subsection{The semicontinuity of the solution map}
	
	\begin{theorem}\label{theo:sta01} 
		Assume that $X^{\infty} \cap \mathcal{K}(f)=\{0\}.$	 Then there exists $\varepsilon>0$ such that for all $u \in \mathbb{B}_\varepsilon$, the following statements hold:
		\begin{enumerate} [$(a)$]
			\item $f_u$ is bounded from below on $X$.
			
			\item  $f_u$ is coercive.
			
			\item  $\Sol(u)$ is nonempty and compact.
			
			\item  $\Limsup_{u \to 0}\Sol(u)\subset \Sol(0)$.
			
			\item   $\Sol(\cdot)$ is usc at $0$.
		\end{enumerate}	
	\end{theorem}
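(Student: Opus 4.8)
The guiding idea is that hypothesis \eqref{equa-CQ} survives small linear perturbations, so that Theorem~\ref{weak-sharp-Thrm} applies verbatim to each perturbed problem \eqref{problem-u} with $u$ near the origin. The first step is the claim: \emph{there exists $\varepsilon>0$ with $X^{\infty}\cap\mathcal{K}(f_u)=\{0\}$ for all $u\in\mathbb{B}_\varepsilon$}. I would argue by contradiction. If the claim fails, there are $u_k\to 0$ and $d_k\in X^{\infty}\cap\mathcal{K}(f_{u_k})$ with $d_k\neq 0$. Since an asymptotic function is positively homogeneous, $\mathcal{K}(f_{u_k})$ is a cone, so I may rescale to $\|d_k\|=1$ and, along a subsequence, assume $d_k\to d$ with $\|d\|=1$; as $X^{\infty}$ is a closed cone, $d\in X^{\infty}$. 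By \eqref{equa-5a} the condition $d_k\in\mathcal{K}(f_{u_k})$ reads $f^{\infty}(d_k)\leq\langle u_k,d_k\rangle$; using that $f^{\infty}$ is lsc (its epigraph $(\operatorname{epi}f)^{\infty}$ is a closed cone) and $\langle u_k,d_k\rangle\to 0$, I obtain $f^{\infty}(d)\leq\liminf_{k}f^{\infty}(d_k)\leq 0$, i.e.\ $0\neq d\in X^{\infty}\cap\mathcal{K}(f)$, contradicting \eqref{equa-CQ}. Fixing such an $\varepsilon$ and noting that $f_u$ is proper and lsc with $\mathrm{dom}\,f_u\cap X=\mathrm{dom}\,f\cap X$ unbounded, items $(a)$, $(b)$, $(c)$ follow immediately from Theorem~\ref{weak-sharp-Thrm} applied to \eqref{problem-u}.

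For $(d)$, let $x\in\Limsup_{u\to 0}\Sol(u)$: there are $u_k\to 0$ and $x_k\in\Sol(u_k)$ (nonempty once $u_k\in\mathbb{B}_\varepsilon$) with $x_k\to x$; then $x\in X$ since $X$ is closed. For any $y\in X$ with $f(y)<+\infty$, optimality of $x_k$ for \eqref{problem-u} with $u=u_k$ gives $f(x_k)\leq f(y)+\langle u_k, x_k-y\rangle$, and since $u_k\to 0$ while $x_k-y\to x-y$, the right-hand side tends to $f(y)$; lower semicontinuity of $f$ then yields $f(x)\leq\liminf_{k}f(x_k)\leq f(y)$. This inequality is trivial when $f(y)=+\infty$, so $f(x)\leq f(y)$ for all $y\in X$; picking any $y_0\in\mathrm{dom}\,f\cap X$ gives $f(x)\leq f(y_0)<+\infty$, and $f(x)>-\infty$ by properness. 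Hence $x\in\Sol(0)$.

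For $(e)$ I would invoke the elementary fact that a set-valued map $F$ with $F(0)$ compact is usc at $0$ provided it is locally bounded near $0$ and $\Limsup_{u\to 0}F(u)\subset F(0)$: if it were not, there would be an open set $V\supset F(0)$ and $u_k\to 0$ with $F(u_k)\not\subset V$, so one may pick $y_k\in F(u_k)\setminus V$; local boundedness gives a subsequence $y_k\to y$, whence $y\in\Limsup_{u\to0}F(u)\subset F(0)\subset V$, while $y$ lies in the complement of $V$, which is closed — a contradiction. Here $F=\Sol(\cdot)$, $\Sol(0)$ is compact by $(c)$ at $u=0$, and the outer-limit inclusion is $(d)$, so it remains only to check local boundedness of $\Sol(\cdot)$ at $0$. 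If this fails, there are $u_k\to 0$ and $x_k\in\Sol(u_k)$ with $\|x_k\|\to\infty$; along a subsequence $x_k/\|x_k\|\to d\in X^{\infty}$, $\|d\|=1$. Fixing $y_0\in\mathrm{dom}\,f\cap X$, optimality gives $f(x_k)\leq f(y_0)+\langle u_k,x_k-y_0\rangle\leq f(y_0)+\|u_k\|\,\|y_0\|+\|u_k\|\,\|x_k\|$; dividing by $\|x_k\|$ and passing to the limit yields $f^{\infty}(d)\leq\liminf_{k}\frac{f(x_k)}{\|x_k\|}\leq 0$, so again $0\neq d\in X^{\infty}\cap\mathcal{K}(f)$, contradicting \eqref{equa-CQ}. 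This proves $(e)$.

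\textbf{Main difficulty.} The routine parts are the limiting arguments resting on lower semicontinuity of $f$. The two delicate points are the two ``uniformity'' claims: the perturbation-stability of \eqref{equa-CQ} (which forces one to use the positive homogeneity and lower semicontinuity of $f^{\infty}$, not just its pointwise values) and the local boundedness of $\Sol(\cdot)$ near $0$ (which requires normalizing the optimality inequality by $\|x_k\|$ so that the asymptotic function $f^{\infty}$ re-enters). Once these are in place, $(a)$–$(e)$ drop out as indicated.
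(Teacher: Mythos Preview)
Your proof is correct and follows essentially the same approach as the paper: the perturbation-stability of \eqref{equa-CQ} is established by the same normalization/lower-semicontinuity argument, $(a)$--$(c)$ are then read off from Theorem~\ref{weak-sharp-Thrm}, $(d)$ is the same limiting argument, and your proof of $(e)$ via the general ``compact value + outer-limit inclusion + local boundedness $\Rightarrow$ usc'' principle is just a repackaging of the paper's direct contradiction argument, with the identical unbounded-sequence computation producing the contradiction $f^\infty(d)\le 0$.
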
	
	\begin{proof} 
		We first show that there exists $\varepsilon>0$ such that for all $u\in\mathbb{B}_\varepsilon$, the following condition holds 
		\begin{equation}\label{equa-6}
			X^{\infty} \cap\mathcal{K}(f_u)=\{0\}.
		\end{equation}  
		Indeed, if otherwise, then for any  $k\in\N$, there is $u_k\in \mathbb{B}_{\frac{1}{k}}$ such that $X^{\infty} \cap \mathcal{K}(f_{u_k})\neq \{0\}$, i.e., there exists $d_k\in X^{\infty} \setminus\{0\}$ such that $(f_{u_k})^\infty (d_k)\leq 0$. Since $X^{\infty}$ is a closed cone, by passing a subsequence if necessary we may assume that $h_k:=\frac{d_k}{\|d_k\|}$ converges to some $h\in X^{\infty}$ with $\|h\|=1$. For each $k>0$, by the positive homogeneity of $(f_{u_k})^\infty$ and the fact that $(f_{u_k})^{\infty} (d_k)\leq 0$, one has $(f_{u_k})^{\infty} (h_k)\leq 0$.		By  \eqref{equa-5a}, we have
		\begin{equation*} 
			f^\infty (h_k) - \langle u_k, h_k \rangle= (f_{u_k})^{\infty} (h_k) \leq 0,
		\end{equation*}
		or, equivalently,
		\begin{equation*} 
			f^\infty (h_k)\leq \langle u_k, h_k \rangle \ \ \forall k\in\N.
		\end{equation*}
		This and the lower semicontinuity of $f^\infty$  imply that
		\begin{equation*}
			f^{\infty} (h) \leq \liminf_{k \to \infty}f^{\infty} (h_k) \leq \lim_{k \to \infty} \langle u_k, h_k\rangle=0.  
		\end{equation*}
		Hence $h\in X^{\infty} \cap \mathcal{K}(f)$, which contradicts the assumption that $X^{\infty} \cap \mathcal{K}(f)=\{0\}$. Thus \eqref{equa-6} holds. 
		
		$(a)$, $(b)$, and $(c)$ follow directly from Theorem \ref{weak-sharp-Thrm} and \eqref{equa-6}.
		
		$(d)$: Take any $\bar x\in \Limsup_{u\to 0}\mathrm{Sol}\,(u)$. Then there exist sequences $u_k\to 0$ and $x_k\in \mathrm{Sol}\,(u_k)$ with $x_k\to \bar x$ as $k\to\infty$. For each $k>0$, we have
		\begin{equation*}
			f(x_k)-\langle u_k, x_k\rangle\leq f(x)- \langle u_k, x\rangle  \ \  \forall x\in X.
		\end{equation*}
		This and the lower semicontinuity of $f$ imply that
		\begin{align*}
			f(\bar x) \leq \liminf_{k \to \infty} f(x_k) & = \liminf_{k \to \infty} (f(x_k) - \langle u_k, x_k \rangle) \\
			& \leq \liminf_{k \to \infty} (f(x) - \langle u_k, x \rangle) = f(x) \ \ \forall x \in X.
		\end{align*}
 Hence $\bar x\in \Sol(0)$, as required. 
		
		$(e)$:  Suppose on the contrary that  $\Sol(\cdot)$ is not usc at $0$. Then there exists an open set $U \subset \mathbb{R}^n$, with $\Sol(0) \subset U$, such that for every neighborhood $0 \in W \subset \mathbb{R}^n$, there exists  $u\in W$ satisfying  $\Sol(u)\nsubseteq U$. Hence, there exist   sequences $u_k$ and $x_k$ such that  $u_k\rightarrow 0$ and  $x_k \in \Sol(u_k) \setminus U$ for all $k \in \mathbb{N}$.  
		
		If  $\{x_k\}$ is bounded, we assume without loss of generality  that $x_k \to \hat{x} \in \mathbb{R}^n$.  By (d),  $\hat{x} \in \Sol(0) \subset U$, which contradicts that $x_k \notin U$ for all $k$ and $U$ is
		open. So, $\{x_k\}$ is unbounded.  Without any loss
		of generality, we can assume that $\|x_k\|\to\infty$ as $k\to\infty$. 	Take $t_k := \|x_k\|$ and $d_k := \frac{x_k}{t_k}$. Hence, $d_{k}
		\to d \in X^{\infty}$ with $\|d\| = 1$, taking a subsequence if necessary. Fix any $y \in X$. 
		Then, we have $f_{u_k} (x_k) \leq f_{u_k}(y)$ for all $k$  and  so
		\begin{align*}
			f^\infty (d) & \leq \liminf_{k \rightarrow \infty} \dfrac{f(t_kd_k)}{t_k} = \liminf_{k \rightarrow \infty} \dfrac{f(x_k) - \langle u_k, x_k \rangle}{t_k} \leq \liminf_{k \rightarrow \infty} \dfrac{f(y) - \langle  u_k, y \rangle}{t_k} \, = 0,
		\end{align*} 
		which contradicts  the fact that $X^{\infty} \cap \mathcal{K}(f) = \{0\}$. 
	\end{proof}

	\begin{lemma}\label{lemma5}
		If $X$ is a convex set and $f$ is a convex function and the following condition holds
		\begin{equation}\label{not-CQ}
			X^{\infty} \cap \mathcal{K}(f) \neq \{0\},
		\end{equation}
		then there exists a sequence $\{u_k\}_{k} \subset \mathbb{R}^n$ with $u_k \rightarrow 0$ as $k \to \infty$  such that $\Sol(u_k) = \emptyset$ for every $k \in\mathbb{N}$.
	\end{lemma}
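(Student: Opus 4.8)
The plan is to find a single ray contained in $X$ along which every small linear perturbation of $f$ decreases to $-\infty$, which instantly empties the solution set. By \eqref{not-CQ} there is a nonzero $v\in X^{\infty}\cap\mathcal{K}(f)$, so $f^{\infty}(v)\le 0$. First I would exploit that $f$ is proper, lsc and convex together with formula \eqref{usual:convex}: fixing any $x_0\in\mathrm{dom}\,f$ one gets $\sup_{t>0}\tfrac{f(x_0+tv)-f(x_0)}{t}=f^{\infty}(v)\le 0$, hence $f(x_0+tv)\le f(x_0)$ for all $t>0$, i.e.\ $f$ is nonincreasing along the direction $v$. Since $\mathrm{dom}\,f\cap X$ is unbounded it is in particular nonempty, so I may take such an $x_0$ lying in $X$; and because $X$ is convex with $v\in X^{\infty}$, the whole ray $\{x_0+tv:t\ge 0\}$ stays in $X$.

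Next I would take the perturbations $u_k:=\tfrac1k v$, which tend to $0$. For each $k$ and every $t\ge 0$,
\[
f_{u_k}(x_0+tv)=f(x_0+tv)-\langle u_k,x_0+tv\rangle\le f(x_0)-\langle u_k,x_0\rangle-\tfrac{t\|v\|^2}{k},
\]
using $f(x_0+tv)\le f(x_0)$ and $\langle u_k,v\rangle=\|v\|^2/k$. As $v\ne 0$, letting $t\to+\infty$ shows that $f_{u_k}$ is not bounded below on $X$, whence $\mu(u_k)=-\infty$ and $\mathrm{Sol}(u_k)=\emptyset$ for every $k\in\N$; together with $u_k\to 0$ this is exactly the assertion.

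I do not anticipate a genuine obstacle here: convexity does all the work, since \eqref{usual:convex} upgrades the asymptotic inequality $f^{\infty}(v)\le0$ into honest monotonicity of $f$ along $v$. The only point that needs a word of care is choosing the base point $x_0$ in $\mathrm{dom}\,f\cap X$ (rather than merely in $\mathrm{dom}\,f$) so that the comparison ray is simultaneously feasible and contained in $\mathrm{dom}\,f$; once that is fixed, any sequence $u_k\to 0$ with $\langle u_k,v\rangle>0$ would serve equally well.
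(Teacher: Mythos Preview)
Your argument is correct and follows essentially the same route as the paper: pick a nonzero $v\in X^{\infty}\cap\mathcal{K}(f)$, set $u_k=\tfrac{1}{k}v$, use the convex formula \eqref{usual:convex} together with the inclusion $x_0+tv\in X$ (from $X$ closed convex and $v\in X^{\infty}$) to see that $f_{u_k}(x_0+tv)\to-\infty$ as $t\to\infty$, whence $\Sol(u_k)=\emptyset$. The only cosmetic difference is that the paper first records $(f_{u_k})^{\infty}(v)=f^{\infty}(v)-\langle u_k,v\rangle<0$ and then applies \eqref{usual:convex} to $f_{u_k}$, whereas you apply \eqref{usual:convex} to $f$ to get monotonicity of $f$ along $v$ and then subtract the linear term by hand; both unwind to the same inequality.
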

	
	\begin{proof}
		If \eqref{not-CQ} holds, then there exists a nonzero vector $d \in X^{\infty}$ such that $f^{\infty} (d) \leq 0$. For each $k\in\mathbb{N}$, let $u_k:=\frac{1}{k}d$. Then, $u_k \rightarrow 0$ as $k\to\infty$ and $\langle u_k, d \rangle > 0$ for all $k \in \mathbb{N}$. Hence,  
		$$(f_{u_k})^\infty(d)= f^{\infty} (d) - \langle u_k, d 
		\rangle < 0 \ \  \forall k \in \mathbb{N}.$$
		Fix any $x_0\in X\cap \mathrm{dom}\, f$. It follows from the convexity of $X$ that $x_0+td\in X$ for all $t>0$.  By \cite[Proposition 2.5.2]{AT} and the convexity of $X$ and $f$, one has
		$$(f_{u_k})^\infty(d)=\lim_{t\to\infty}\dfrac{f_{u_k}(x_0+td)-f_{u_k}(x_0)}{t}= \lim_{t\to\infty}\dfrac{f_{u_k}(x_0+td)}{t}.$$
		Hence,
		$$\lim_{t\to\infty}\dfrac{f_{u_k}(x_0+td)}{t}<0.$$
		This implies that $\lim_{t\to\infty} {f_{u_k}(x_0+td)}=-\infty$ for any $k\in\N$. Hence, $\Sol (u_k) = \emptyset$ for all $k\in\mathbb{N}$. The proof is complete.
	\end{proof}	
	
The following theorem presents necessary/sufficient conditions for the 
lower semicontinuity of the  solution  map $\Sol(\cdot)$.
	
\begin{theorem} [The lower semicontinuity of the solution map] \label{theorem12} 
 If the following conditions are satisfied:
 \begin{itemize}
  \item[$(a)$] $\Sol(0)$ is a singleton,
			
  \item[$(b)$] $X^{\infty} \cap \mathcal{K}(f) = \{0\}$, 
  \end{itemize}
 then $\Sol(\cdot)$  is lsc  at $0$. Conversely, if  $\Sol(\cdot)$ is lsc at $0$, then 
 $(a)$ holds true. Moreover, if $X$ and $f$ are additionally assumed to be convex, 
 then $(b)$ is also satisfied.
\end{theorem}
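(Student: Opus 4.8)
The plan is to prove the three assertions in turn: (I) $(a)\wedge(b)$ implies $\Sol(\cdot)$ is lsc at $0$; (II) $\Sol(\cdot)$ lsc at $0$ implies $(a)$; and (III) if $X,f$ are in addition convex, then $\Sol(\cdot)$ lsc at $0$ implies $(b)$.

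For (I) I would lean on Theorem~\ref{theo:sta01}: under $(b)$ there is $\varepsilon>0$ such that $\Sol(u)$ is nonempty and compact for all $u\in\mathbb{B}_\varepsilon$ and $\Limsup_{u\to 0}\Sol(u)\subset\Sol(0)$. Writing $\Sol(0)=\{\bar x\}$ by $(a)$, I argue by contradiction: were $\Sol(\cdot)$ not lsc at $0$, there would be an open $V\ni\bar x$ and $u_k\to 0$ with $\Sol(u_k)\cap V=\emptyset$; picking $x_k\in\Sol(u_k)$ (nonempty once $u_k\in\mathbb{B}_\varepsilon$) gives $x_k\notin V$. If a subsequence of $\{x_k\}$ were bounded, its limit would lie in $\Limsup_{u\to 0}\Sol(u)\subset\{\bar x\}$, forcing $x_k\in V$ eventually, a contradiction. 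So $\{x_k\}$ is unbounded; setting $t_k=\|x_k\|\to\infty$ and $d_k=x_k/t_k\to d\in X^\infty$ with $\|d\|=1$, the optimality relation $f_{u_k}(x_k)\le f_{u_k}(y)$ for a fixed $y\in X\cap\operatorname{dom}f$, divided by $t_k$, together with $\langle u_k,x_k\rangle/t_k=\langle u_k,d_k\rangle\to 0$ and $(f(y)-\langle u_k,y\rangle)/t_k\to 0$, yields $f^\infty(d)\le\liminf_k f(t_kd_k)/t_k\le 0$, i.e., $d\in X^\infty\cap\mathcal{K}(f)=\{0\}$, contradicting $\|d\|=1$. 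This is essentially the mechanism already used for Theorem~\ref{theo:sta01}(e).

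For (II), assume $\Sol(\cdot)$ is lsc at $0$; then $\Sol(0)\ne\emptyset$, and since $f$ is proper with $X\cap\operatorname{dom}f\ne\emptyset$, the value $f_*$ is finite. Suppose for contradiction that $a,b\in\Sol(0)$ with $a\ne b$, set $w:=b-a$ and $u_t:=tw$ for $t>0$. For any $z\in\Sol(u_t)$ the inequality $f_{u_t}(z)\le f_{u_t}(b)$ rewrites as $f(z)-f_*\le t\langle w,z-b\rangle$, which with $f(z)\ge f_*$ forces $\langle w,z\rangle\ge\langle w,b\rangle=\langle w,a\rangle+\|w\|^2$. Hence $\Sol(u_t)\cap V=\emptyset$ for every $t>0$ (vacuously if $\Sol(u_t)=\emptyset$), where $V:=\{z:\|z-a\|<\|w\|\}$, because $z\in V$ implies $\langle w,z-a\rangle\le\|w\|\,\|z-a\|<\|w\|^2$. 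Since $a\in\Sol(0)\cap V$, lsc at $0$ would give $\Sol(u_t)\cap V\ne\emptyset$ for all small $t>0$, a contradiction; thus $\Sol(0)$ is a singleton.

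For (III), assume $X,f$ convex, $\Sol(\cdot)$ lsc at $0$, but $(b)$ false, i.e., $X^\infty\cap\mathcal{K}(f)\ne\{0\}$. Lemma~\ref{lemma5} then produces $u_k\to 0$ with $\Sol(u_k)=\emptyset$ for all $k$, whereas lsc at $0$ (applied with $V=\R^n$, using $\Sol(0)\ne\emptyset$) gives a neighborhood of $0$ on which $\Sol(\cdot)$ is nonempty, which the $u_k$ eventually enter — a contradiction, so $(b)$ holds. The only genuinely delicate steps are the unbounded case of (I), where one must verify that the linear perturbation terms vanish after normalizing by $t_k$ so that $d$ really lands in $X^\infty\cap\mathcal{K}(f)$, and the calibrated choice $V=B(a,\|w\|)$ in (II), chosen precisely so that membership in $V$ contradicts the separation $\langle w,\cdot\rangle\ge\langle w,b\rangle$ that holds on every perturbed solution set $\Sol(tw)$; everything else is routine bookkeeping with the definitions of upper/lower semicontinuity and direct appeals to Theorem~\ref{theo:sta01} and Lemma~\ref{lemma5}.
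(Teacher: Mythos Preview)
Your proposal is correct and follows essentially the same route as the paper. For (I) the paper is slightly more economical: it simply combines the nonemptiness of $\Sol(u)$ with the upper semicontinuity of $\Sol(\cdot)$ at $0$ (both from Theorem~\ref{theo:sta01}) to get $\emptyset\neq\Sol(u)\subset U$ for small $u$, whereas you rederive the boundedness/unboundedness dichotomy that underlies Theorem~\ref{theo:sta01}(e); for (II) and (III) your linear separation via $w=b-a$ and your appeal to Lemma~\ref{lemma5} match the paper's arguments almost verbatim.
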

	
\begin{proof}
 Suppose that $(a)$ and $(b)$ hold. By $(a)$, $\Sol(0) = \{\bar x\}$ for some
  $\bar{x} \in X$. Let $U$ be an open neighborhood containing  
 $\bar x$. Then, by (b) and \eqref{equa-6}, there exists $\varepsilon_{1} > 0$ 
 such that 
 $$X^\infty\cap \mathcal{K}(f_u)=\{0\}\ \  \forall u\in \mathbb{B}_{\varepsilon_1}.$$ 
		It follows from Theorem \ref{theo:sta01}(c) that  $\Sol(u) \neq \emptyset$ for every $u$ with $\Vert u \Vert < \varepsilon_1$. Since $\Sol(\cdot) $ is usc at $0$, by Theorem \ref{theo:sta01}(e), there exists $\varepsilon_{2} > 0$ such that $\Sol(u) \subset U$ for every $u$ satisfying $\Vert u \Vert < \varepsilon_{2}$. Thus 
		$\Sol(u) \cap U \neq \emptyset$.
		Therefore, by taking $\varepsilon := \min\{\varepsilon_{1}, \varepsilon_{2}\} > 0$, we obtain that $\Sol(\cdot)$ is lsc at $0$. 	
		
		Conversely, suppose that $\Sol(\cdot)$ is lsc at $0$. We show that the condition $(a)$  is satisfied. Suppose on the contrary that $(a)$ does not hold, i.e., $\Sol(0)$ contains at least two different points $\bar x$ and $ \bar y$.  We choose $\lambda = (\lambda_1, \ldots,\lambda_n) \in \mathbb{R}^n$
		satisfying
		\begin{equation*}
			\Vert \lambda\Vert =1, ~~ - \langle \lambda, \bar x \rangle > - \langle \lambda, \bar y \rangle.
		\end{equation*}
		So, there exists an open set $U$ containing $\bar x$ such that
		\begin{equation}\label{eq:c2}
			- \langle \lambda, x \rangle > - \langle \lambda, \bar y \rangle\ \  \forall  x \in U.
		\end{equation}
		For any $\varepsilon>0$ and $x\in U$,  from  \eqref{eq:c2} it follows that
		$$
		f_{\varepsilon\lambda}(x)= f(x)+  \langle -\lambda\epsilon, x\rangle
		> f(\bar x) +\epsilon \langle -\lambda, \bar y\rangle =f(\bar y) -\epsilon \langle \lambda, \bar y\rangle=	f_{\varepsilon\lambda}(\bar y).$$
		It implies that $x \notin  \Sol(\epsilon\lambda)$. Hence, there exists $\epsilon\lambda\rightarrow 0$ as $\varepsilon\rightarrow0$
		such that $\Sol(\epsilon\lambda)\cap U=\emptyset$, which contradicts that $\Sol(\cdot)$ is lsc at $0$, i.e., condition $(a)$ holds. 
		
Now, suppose in addition that $X$ and $f$ are convex but $(b)$ does not hold. 
Then, by Lemma \ref{lemma5}, there exists a sequence $\{u_k\}_{k} \subset
\mathbb{R}^n$ such that $u_k \rightarrow 0$ as $k \to \infty$ and $\Sol(u_k) 
= \emptyset$ for all $k\in\mathbb{N}$. This  contradicts  that $\Sol(\cdot)$ is 
lsc at $0$.
\end{proof}

\subsection{The continuity of the optimal value function}

	Now, we focus our attention on the continuity of the optimal value function $\mu$.  
	\begin{theorem} \label{theorem22}
		The following assertions hold:
		\begin{enumerate}[$(a)$]
			\item $\mu$ is  usc at $0$.
			\item If $X^{\infty} \cap{K}(f)=\{0\}$,
			then $\mu$ is lsc at $0$ and so it is continuous at  $0$. 
		\end{enumerate}	
	\end{theorem}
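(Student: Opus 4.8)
\emph{Approach.} Part $(a)$ requires no assumption. Writing $\mu(u)=\inf_{x\in X}\bigl(f(x)-\langle u,x\rangle\bigr)$ displays $\mu$ as the pointwise infimum of the affine (hence continuous) functions $u\mapsto f(x)-\langle u,x\rangle$, $x\in X$, and the infimum of any family of upper semicontinuous functions is upper semicontinuous; thus $\mu$ is usc on $\R^n$, in particular at $0$. Equivalently, for each $x\in X$ one has $\limsup_{u\to0}\mu(u)\le\limsup_{u\to0}\bigl(f(x)-\langle u,x\rangle\bigr)=f(x)$, and passing to the infimum over $x\in X$ gives $\limsup_{u\to0}\mu(u)\le\mu(0)$.

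For part $(b)$, in view of $(a)$ it is enough to prove $\liminf_{u\to0}\mu(u)\ge\mu(0)$. First I would note that $\mu(0)=f_*$ is finite by Theorem \ref{weak-sharp-Thrm}$(a)$ and take the $\varepsilon>0$ provided by Theorem \ref{theo:sta01}, so that $\Sol(u)$ is nonempty (and compact) for every $u\in\mathbb{B}_\varepsilon$. Arguing by contradiction, suppose $u_k\to0$ satisfies $\mu(u_k)\to\ell<\mu(0)$; for $k$ large choose $x_k\in\Sol(u_k)$, so $\mu(u_k)=f(x_k)-\langle u_k,x_k\rangle$.

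I would then distinguish two cases. If $\{x_k\}$ has a bounded subsequence, pass to it and assume $x_k\to\bar x$; by Theorem \ref{theo:sta01}$(d)$, $\bar x\in\Sol(0)$, and since $\langle u_k,x_k\rangle\to0$ the lower semicontinuity of $f$ gives $\mu(0)=f(\bar x)\le\liminf_k f(x_k)=\lim_k\bigl(f(x_k)-\langle u_k,x_k\rangle\bigr)=\ell$, contradicting $\ell<\mu(0)$. Otherwise $\|x_k\|\to\infty$; put $t_k:=\|x_k\|$, $d_k:=x_k/t_k$, and, along a subsequence, $d_k\to d\in X^{\infty}$ with $\|d\|=1$. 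Since $d\ne0$, the assumption $X^{\infty}\cap\mathcal{K}(f)=\{0\}$ forces $f^{\infty}(d)>0$; as $f^{\infty}(d)\le\liminf_k f(t_kd_k)/t_k$ and $\langle u_k,d_k\rangle\to0$, we get $\liminf_k\bigl(f(t_kd_k)/t_k-\langle u_k,d_k\rangle\bigr)\ge f^{\infty}(d)>0$, hence $\mu(u_k)=t_k\bigl(f(t_kd_k)/t_k-\langle u_k,d_k\rangle\bigr)\to+\infty$, contradicting $\mu(u_k)\to\ell<+\infty$. Either way we reach a contradiction, so $\mu$ is lsc at $0$, and with $(a)$ it is continuous at $0$.

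The only real obstacle is the unbounded case, where $\langle u_k,x_k\rangle$ is an indeterminate $0\cdot\infty$ product; the remedy is to normalize by $t_k=\|x_k\|$, so that the perturbation term $\langle u_k,d_k\rangle$ vanishes in the limit while the asymptotic slope $f^{\infty}(d)$ stays strictly positive (because $d\in X^{\infty}\setminus\mathcal{K}(f)$), which makes $\mu(u_k)$ blow up. The remaining ingredients---upper semicontinuity of an infimum of affine functions and the outer limit inclusion of Theorem \ref{theo:sta01}$(d)$---are routine.
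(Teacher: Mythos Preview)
Your proposal is correct and follows essentially the same route as the paper. For $(a)$, the paper argues sequentially with a minimizing sequence $f(x_l)\to\mu(0)$, while you phrase it as an infimum of affine functions; these are equivalent formulations of the same observation. For $(b)$, the paper likewise argues by contradiction, picks minimizers $x_k\in\Sol(u_k)$ (via Theorem~\ref{theo:sta01}(c)), and handles the bounded and unbounded alternatives; the only cosmetic differences are that in the unbounded case the paper derives $f^\infty(d)\le 0$ directly from $\mu(u_k)\le\beta$ (rather than your contrapositive that $f^\infty(d)>0$ forces $\mu(u_k)\to+\infty$), and in the bounded case it uses closedness of $X$ and lsc of $f$ directly to get $f(\hat x)\le\beta<\mu(0)$ without invoking Theorem~\ref{theo:sta01}(d).
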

	
	\begin{proof}
		$(a)$ Let $\{u_k\}_{k} \subset \mathbb{R}^n$ be a sequence converging to $0$. Since $X\neq\emptyset$, we have $\mu (0)< +\infty$. Then, there is a sequence $\{x_l\}_{l}$ in $\mathbb{R}^n$ such that $x_l\in X$ and $f(x_l) \to \mu (0)$ as $l \to \infty$. For each $l \in \mathbb{N}$, we have $\mu(u_k) \leq f_{u_k}(x_l)$ for all   $k \in \mathbb{N}$. This implies that $$\limsup\limits_{k\to \infty}\mu(u_k) \leq f(x_{l}).$$ 	
		Taking $l \to \infty$  we get	
		$$\limsup\limits_{k\to \infty}\mu(u_k) \leq \mu(0).$$ 
		This means that $\mu$ is usc at $0$. 
		
		$(b)$ Let $\{u_k\}_{k} \subset \mathbb{R}^n$ be an arbitrary sequence converging to $0$. We show that 
		$$\liminf\limits_{k\to \infty}\mu(u_k) \geq \mu(0),$$
		i.e., $\mu$ is lsc at $0$. Suppose on the contrary that
		$\liminf\limits_{k\to \infty}\mu(u_k) < \mu(0)$.
		By taking a subsequence if
		necessary, we can assume  that
		$$\liminf\limits_{k\to \infty}\mu(u_k) =\lim\limits_{k\to \infty}\mu(u_k).$$
		Then there exist $k_0 \in \mathbb{N}$ and $\beta \in \mathbb{R}$ such that $\beta < \mu(0)$ and $\mu(u_k)\leq \beta$ for all $k \geq k_0$.
		Since $u_k\to 0$ as $k\to \infty$ and Theorem \ref{theo:sta01}(c), there exists an integer $k_1 \geq k_0$ such that  $\Sol(u_k)\neq \emptyset$ for all $k\geq k_1$. For  each $k\geq k_1$, take any $x_k\in \Sol(u_k)$. Then we have $x_k\in X$ and
		\begin{equation}\label{42}
			f_{u_k}(x_k)=\mu(u_k)\leq \beta.
		\end{equation}
		We show that the sequence $\{x_k\}_{k}$ is bounded. Indeed, if otherwise, then, without loss of generality, we can assume that $\Vert x_k\Vert \neq 0$ for all $k\geq k_1$ and $\Vert x_k \Vert \to +\infty$ as $k\to \infty$. Then, the sequence 
		$\{\|x_k\|^{-1}x_k\}_{k}$ is bounded, and hence it has a convergent subsequence. Without loss of generality, we can assume that this sequence itself converges to some $d\in \mathbb{R}^n$ with $\Vert d\Vert =1$ and $d\in X^{\infty}$. Let $t_k:=\|x_k\|$ and 
		$d_k:=t_k^{-1}x_k.$ By \eqref{42}, we obtain
$$\frac{f_{u_k}(t_kd_k)}{ t_k}\leq \frac{\beta}{t_k}\ \  \forall  k \geq k_1.$$
Letting $k\to \infty$ in the last inequality we get
		\begin{align*}
			f^\infty(d) & \leq \liminf_{k \rightarrow \infty} \dfrac{f(t_k d_k)}{t_k} = \liminf_{k \rightarrow\infty}\dfrac{f(t_kd_k)-\langle u_k, d_k\rangle}{t_k} = \liminf_{k \rightarrow \infty} \dfrac{f_{u_k}(x_k)}{t_k} \leq 0.
		\end{align*} 
		This with $d\in X^{\infty}$ contradict  the assumption that $X^{\infty} \cap{K} (f)=\{0\}$.  Therefore, $\{x_k\}_{k}$ is bounded. 
		
		Now, passing a subsequence if needed, $\{x_k\}_{k}$ converges to some $\hat{x} \in X$.
		By \eqref{42} and the lower semicontinuity of $f$, we obtain 
		$$f(\hat{x})\leq  \lim\limits_{k\to \infty} f_{u_k} (x_k) \leq \beta.$$
		Combining this with $\beta < \mu(0)$, we have $f(\hat{x})  < \mu(0)$, which contradicts that $\hat{x}\in X$. Therefore, $\mu$ is lsc at $0$ and the proof is complete.
	\end{proof}

\section{The Quasiconvex Case}\label{section5}
	
In this section, we apply our previous results to the particular case when the 
objective function in problem \eqref{problem} is quasiconvex or $\alpha$-robustly
quasiconvex (see Definition \ref{alpha:robust}).

As mentioned in the introduction, when the function $f$ is nonconvex, then the 
usual asymptotic function $f^{\infty}$ does not provide adequate information
on the behavior of $f$ at infinity. For instance, and in relation to Theorem 
\ref{weak-sharp-Thrm}, we mention that when the function $f$ is quasiconvex, 
the assumption 
$$X^{\infty} \cap \mathcal{K}(f) =  \{0\},$$
is too restrictive. Indeed, let us consider the one-dimensional real-valued 
function $f: \mathbb{R} \rightarrow \mathbb{R}$ given by $f(x) = \sqrt{\lvert 
x \rvert}$ and $X=\mathbb{R}$. Here $f$ is coercive and ${\rm argmin}_{X}\,f 
= \{0\}$ is a singleton, but $X^{\infty} \cap \mathcal{K}(f) = \mathbb{R}$, 
and Theorem \ref{weak-sharp-Thrm} cannot be applied even in this basic situation.
	
On the other hand, if we use any of the generalized asymptotic functions
$f^{\infty}_{q}$ and $f^{\infty}_{\lambda}$, we obtain that
$$f^{\infty}_{q} (u) = + \infty\ \  \forall  u \neq 0, ~~ {\rm and}~~ 0 < f^{\infty}_{\lambda} (u) \leq 2\ \  \forall   u \neq 0.$$
Therefore, 
$$X^{\infty} \cap K_{q} (f) =  \{0\}, ~~ {\rm and}~~ X^{\infty} \cap 
K_{\lambda} (f) =  \{0\},$$
where $K_{q}(f) := \{d \in \mathbb{R}^{n}: \, f^{\infty}_{q} (d) \leq 0\}$ and
$K_{\lambda}(f) := \{d \in \mathbb{R}^{n}: \, f^{\infty}_{\lambda} (d) \leq 0\}$,
respectively. 
	
Before stating main results of this section, we give a result on the nonemptiness 
and compactness of the solution set to constrained optimization problems by 
using the $q$-asymptotic function.

\begin{lemma}\label{lemma-51} 
 Assume that $X \subset \mathrm{dom}\, f$ is a convex set and $f$  is a 
 quasiconvex function. Then, the  fo\-llo\-wing assertions are equivalent:
 \begin{enumerate}[$(a)$]
  \item $\Sol\eqref{problem}$ is nonempty and compact.
  
  \item $X^\infty\cap \mathcal{K}_q(f)=\{0\}$.
 \end{enumerate}
\end{lemma}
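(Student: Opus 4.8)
The plan is to prove the two implications separately, using the sublevel asymptotic function machinery and the characterization $f^\infty_\lambda(u)>0\ \forall u\neq 0 \iff \mathrm{argmin}_{\mathbb{R}^n}f$ nonempty and compact recalled in Section~\ref{section2} (from \cite[Theorem 3.1]{Lara-4}), together with the elementary ordering $f^\infty(\cdot)\leq f^\infty_\lambda(\cdot)\leq f^\infty_q(\cdot)$ and the fact that for quasiconvex $f$ the function $f^\infty_q$ is itself quasiconvex and positively homogeneous.

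For $(b)\Rightarrow(a)$: Assume $X^\infty\cap\mathcal{K}_q(f)=\{0\}$. Since $X\subset\mathrm{dom}\,f$ and the supremum defining $f^\infty_\lambda$ is taken over the smaller set $S_\lambda(f)$ rather than over $\mathrm{dom}\,f$, we get in particular $f^\infty_\lambda\leq f^\infty_q$ for every admissible height $\lambda$, hence $\mathcal{K}_q(f)\subset\mathcal{K}_\lambda(f)$ is the wrong direction — so instead I would argue directly at infinity. By contradiction, suppose $\Sol\eqref{problem}$ is empty or unbounded. If it is unbounded there is a minimizing direction; if it is empty one first shows, via quasiconvexity and convexity of $X$, that the restriction of $f$ to $X$ fails to be coercive on $X$ and produces a sequence $x_k\in X$ with $\|x_k\|\to\infty$ and $f(x_k)$ bounded above by $f_*+1$ (or any fixed value above $f_*$). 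Passing to a subsequence, $x_k/\|x_k\|\to d\in X^\infty$ with $\|d\|=1$. The key quasiconvexity step: for fixed $x_0\in X\cap\mathrm{dom}\,f$ and any $t>0$, once $\|x_k\|$ is large the point $x_0+td$ lies on the segment-limit direction, and using that $S_\lambda(f)$ is convex (quasiconvexity) together with $d\in X^\infty$, one bounds $\sup_{x\in S_\lambda(f)}\sup_{s>0}\frac{f(x+sd)-f(x)}{s}\leq 0$, i.e.\ $f^\infty_q(d)\leq 0$ after also using the monotonicity $f^\infty_\lambda\leq f^\infty_q$ is again the wrong way — so here I would instead invoke the already-proved Theorem~\ref{weak-sharp-Thrm}-type reasoning with $f^\infty$ replaced by $f^\infty_q$, exploiting that for quasiconvex $f$ one has $f^\infty_q(d)\leq\liminf_k f(t_kd_k)/t_k$ along such sequences (this is the quasiconvex analogue, using convexity of sublevel sets to push the difference quotient to the whole domain). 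That yields $d\in X^\infty\cap\mathcal{K}_q(f)$, $d\neq 0$, a contradiction.

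For $(a)\Rightarrow(b)$: Assume $\Sol\eqref{problem}$ is nonempty and compact; suppose $d\in X^\infty$, $d\neq 0$, with $f^\infty_q(d)\leq 0$. Fix $\bar x\in\Sol\eqref{problem}\subset X\cap\mathrm{dom}\,f$. From the definition of $f^\infty_q$ and $f^\infty_q(d)\leq 0$ we get, for every $x\in\mathrm{dom}\,f$ and every $t>0$, $f(x+td)\leq f(x)$; in particular $f(\bar x+td)\leq f(\bar x)$ for all $t>0$. Since $X$ is convex and $d\in X^\infty$, $\bar x+td\in X$ for all $t\geq 0$, so the entire ray $\{\bar x+td:t\geq 0\}$ lies in $\Sol\eqref{problem}$, contradicting boundedness of $\Sol\eqref{problem}$. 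Hence $X^\infty\cap\mathcal{K}_q(f)=\{0\}$.

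The main obstacle is the quasiconvex direction $(b)\Rightarrow(a)$: unlike the convex case one cannot invoke formula~\eqref{usual:convex}, and one must either (i) quote the sublevel-set characterization from \cite{Lara-4} after verifying that on $X$ the hypothesis $X^\infty\cap\mathcal{K}_q(f)=\{0\}$ forces $f^\infty_q(d)>0$ for all unit $d\in X^\infty$, then extend a coercivity-on-$X$ argument à la Theorem~\ref{weak-sharp-Thrm}, or (ii) adapt the contradiction argument of Theorem~\ref{weak-sharp-Thrm}(b) with $f^\infty$ replaced by $f^\infty_q$, the crucial point being the inequality $f^\infty_q(d)\leq\liminf_{k}\frac{f(x_k)}{\|x_k\|}$ whenever $x_k\in X$, $\|x_k\|\to\infty$, $x_k/\|x_k\|\to d$, which holds for lsc quasiconvex $f$ with $X\subset\mathrm{dom}\,f$ by pushing difference quotients along convex sublevel sets. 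I expect the cleanest write-up to rely on the characterization already quoted in Section~\ref{section2} plus the convexity of $X$ to control the feasibility of the limiting ray.
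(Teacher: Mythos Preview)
Your implication $(a)\Rightarrow(b)$ is clean and correct: from $f^\infty_q(d)\leq 0$ one reads off $f(x+td)\leq f(x)$ for every $x\in\mathrm{dom}\,f$ and $t>0$, and together with $\bar x+td\in X$ (convexity of $X$, $d\in X^\infty$) the whole ray $\bar x+\mathbb{R}_+d$ lies in $\Sol\eqref{problem}$, contradicting compactness. This is exactly the argument of Proposition~\ref{prop:comparison2} with $f^\infty_q$ in place of $f^\infty$, and the paper does not redo this direction separately.

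The direction $(b)\Rightarrow(a)$, however, has a genuine gap. The key inequality you eventually propose,
\[
f^\infty_q(d)\ \leq\ \liminf_{k\to\infty}\frac{f(x_k)}{\|x_k\|}\qquad\bigl(x_k\in X,\ \|x_k\|\to\infty,\ x_k/\|x_k\|\to d\bigr),
\]
goes the wrong way: the right-hand side is an upper bound for $f^\infty(d)$ (by the very definition of $f^\infty$ as an infimum of such $\liminf$'s), whereas $f^\infty_q\geq f^\infty$, so nothing prevents $f^\infty_q(d)$ from being strictly larger. Your own parenthetical remarks (``is the wrong direction'', ``is again the wrong way'') already flag this; the final route does not escape it. There is a second structural obstacle: concluding $f^\infty_q(d)\leq 0$ means $f(y+td)\leq f(y)$ for \emph{every} $y\in\mathrm{dom}\,f$, not only $y\in X$, and points $y\in\mathrm{dom}\,f\setminus X$ with $f(y)<f_*$ need not lie in any sublevel set containing your sequence $x_k$, so the convex-sublevel-set argument cannot reach them.

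The paper avoids all of this by a single reduction: it applies the unconstrained characterization \cite[Theorem~4.7]{FFB-Vera} (quoted in Section~\ref{section2}) not to $f$ but to $g:=f+\delta_X$, which is still proper, lsc and quasiconvex with $\mathrm{argmin}_{\mathbb{R}^n}g=\Sol\eqref{problem}$, and then computes directly (using the convexity of $X$) that $(f+\delta_X)^\infty_q(d)=f^\infty_q(d)+\delta_{X^\infty}(d)$. This yields $\mathcal{K}_q(f+\delta_X)=X^\infty\cap\mathcal{K}_q(f)$ at once, so both implications follow simultaneously from the cited theorem. The indicator-function trick is the missing idea in your plan for $(b)\Rightarrow(a)$.
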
 

\begin{proof} 
 By \cite[Theorem 4.7]{FFB-Vera}, it suffices to   show that $(b)$ is equivalent to 
 the following condition
		\begin{equation*} 
			\mathcal{K}_q(f+\delta_X)=\{0\},
		\end{equation*} 
		where $\delta_X$ is the indicator function of $X$ and defined by
		\begin{equation*}
			\delta_X=
			\begin{cases}
				0, &\text{if}\ \ x\in X,
				\\
				+\infty, &\text{otherwise}.
			\end{cases}
		\end{equation*}
Indeed, by definition of the $q$-asymptotic function and the convexity of $X$, we have
		\begin{align*}
			(f+\delta_X)_q^\infty(d)&=\sup_{x \in {\rm dom}\,(f+\delta_X)} \sup_{t>0} \frac{(f+\delta_X)(x+td) - (f+\delta_X)(x)}{t}
			\\
			&=\sup_{x \in X} \sup_{t>0} \frac{f(x+td)+\delta_X(x+td) - f(x)}{t}
			\\
			&=
			\begin{cases}
				f_q^\infty(d), &\text{if}\ \ d\notin X^\infty,
				\\
				+\infty, &\text{otherwise},
			\end{cases}
			\\
			&=f_q^\infty(d)+\delta_{X^\infty}(d).
		\end{align*}
Hence, $X^\infty\cap \mathcal{K}_q(f)=\{0\}$ if and only if $\mathcal{K}_q
(f+\delta_X)=\{0\}$, which completes the proof. 
\end{proof}
 
\begin{remark}\rm 
 The equivalent conditions $(a)$ and $(b)$ in Lemma \ref{lemma-51} do not imply 
 the coercivity of $f$ on $X$. For example, let $f(x)=\tfrac{x^2}{1+x^2}$ and
 $X=\mathbb{R}$. Clearly, $f$ is quasiconvex on $\mathbb{R}$ and
 $\mathcal{K}_q(f)=\{0\}$. Hence, condition $(b)$ in Lemma \ref{lemma-51} 
 is satisfied, but the function $f$ is not coercive on $\mathbb{R}$. 
\end{remark}
 
In the same spirit than Proposition \ref{prop:comparison2}, the following 
corollary shows that condition \eqref{equa-CQq} is weaker than \eqref{Son-CQ} 
for quasiconvex functions, too.

\begin{corollary} 
Assume that $X\subset \mathrm{dom}\,f$ and it is   convex. If the function $f$ is  quasiconvex      and bounded from below on $X$ and   \eqref{Son-CQ} holds, then so is \eqref{equa-CQq}. 
\end{corollary}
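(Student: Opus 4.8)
The plan is to deduce this corollary by chaining two facts that are already on the table: the variational result \cite[Theorem 6.4]{Kim-Tung-Son-23}, which converts the constraint qualification \eqref{Son-CQ} into the statement that $\Sol\eqref{problem}$ is nonempty and compact, and Lemma \ref{lemma-51}, which converts that information about the solution set into the asymptotic condition \eqref{equa-CQq}. No new estimate is needed; the whole point is to verify that the hypotheses of these two results are met.

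First I would record that, by the blanket hypotheses of Section \ref{section3}, $f\colon\R^n\to\overline\R$ is proper and lsc and $X$ is closed; the corollary adds that $X\subset\mathrm{dom}\,f$ is convex, that $f$ is quasiconvex and bounded from below on $X$, and that \eqref{Son-CQ} holds. Using only that $f$ is proper, lsc and bounded below on the closed set $X$, together with \eqref{Son-CQ}, \cite[Theorem 6.4]{Kim-Tung-Son-23} gives that $\Sol\eqref{problem}$ is nonempty and compact --- this is exactly the step already used in the final paragraph of the proof of Proposition \ref{prop:comparison2}, but now with no convexity assumption on $f$. Next, since $X\subset\mathrm{dom}\,f$ is convex and $f$ is quasiconvex, the remaining hypotheses of Lemma \ref{lemma-51} are satisfied, so its equivalence $(a)\Leftrightarrow(b)$ applies; as $(a)$ has just been verified, $(b)$ holds, i.e., $X^\infty\cap\mathcal K_q(f)=\{0\}$, which is precisely \eqref{equa-CQq}. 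This finishes the argument.

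I do not anticipate a real obstacle: the proof is a two-line deduction once the hypotheses are lined up. The only thing to be a little careful about is the bookkeeping of standing assumptions --- confirming that \cite[Theorem 6.4]{Kim-Tung-Son-23} does not itself require convexity (it does not, as witnessed by its use in Proposition \ref{prop:comparison2}), and that the lower semicontinuity tacitly needed by Lemma \ref{lemma-51} through \cite[Theorem 4.7]{FFB-Vera} is supplied by the blanket assumption on $f$ made at the start of Section \ref{section3}.
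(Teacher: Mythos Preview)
Your proposal is correct and follows essentially the same route as the paper: invoke \cite[Theorem 6.4]{Kim-Tung-Son-23} to obtain that $\Sol\eqref{problem}$ is nonempty and compact, then apply Lemma \ref{lemma-51} to conclude \eqref{equa-CQq}. The paper's proof is the same two-line deduction, with less commentary on the bookkeeping of standing assumptions.
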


\begin{proof} 
 By \eqref{Son-CQ} and
 \cite[Theorem 6.4]{Kim-Tung-Son-23}, $\Sol\eqref{problem}$ is nonempty and
 compact. Thus, the desired conclusion follows directly from Lemma \ref{lemma-51}. \end{proof}
	
In the next proposition we improve Theorem \ref{weak-sharp-Thrm} for proper, 
lsc, $\alpha$-robustly quasiconvex functions. 
	
\begin{proposition}\label{sta:robust}
 Let $X$ be a convex set and $f$ be an $\alpha$-robustly quasiconvex function
 $(\alpha>0)$ with $X \subset  {\rm dom}\,f$. If 
 \begin{equation}\label{equa-CQq}
  X^{\infty} \cap K_{q} (f) = \{0\},
 \end{equation}
 then there exists $\varepsilon > 0$ such that for all $u \in \mathbb{B}_{\varepsilon}$,
 the following statements hold:
 \begin{enumerate} [$(a)$]
  \item $f_u$ is bounded from below on $X$.
			
  \item ${\rm \Sol}(u)$ is nonempty and compact.
			
  \item $ \limsup_{u \rightarrow 0} {\rm \Sol} (u) \subset {\rm \Sol} (0)$.
  
  \item $\Sol(\cdot)$ is usc at $0$.
 \end{enumerate}
\end{proposition}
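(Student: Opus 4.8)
The plan is to follow the scheme of the proof of Theorem~\ref{theo:sta01}, replacing the usual asymptotic function $f^{\infty}$ and the set $\mathcal{K}(f)$ everywhere by the $q$-asymptotic function $f^{\infty}_{q}$ and the set $K_{q}(f)$, and using the $\alpha$-robust quasiconvexity to guarantee that the perturbed objectives $f_{u}$ stay quasiconvex. The first step is to produce a radius $\varepsilon\in(0,\alpha]$ such that $X^{\infty}\cap K_{q}(f_{u})=\{0\}$ for every $u\in\mathbb{B}_{\varepsilon}$. Once this is available, $(a)$ and $(b)$ will follow from Lemma~\ref{lemma-51} applied to $f_{u}$ on the convex set $X\subset\mathrm{dom}\,f=\mathrm{dom}\,f_{u}$ (together with properness of $f$ for the finite lower bound in $(a)$), and $(c)$ will follow verbatim from the argument in the proof of Theorem~\ref{theo:sta01}$(d)$, which uses only the lower semicontinuity of $f$ and the closedness of $X$. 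So the real work is concentrated in $(d)$.

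For the first step I would argue by contradiction as in the opening of the proof of Theorem~\ref{theo:sta01}: if no such $\varepsilon$ exists, choose $u_{k}\in\mathbb{B}_{1/k}$ and $d_{k}\in X^{\infty}\setminus\{0\}$ with $(f_{u_{k}})^{\infty}_{q}(d_{k})\leq 0$, normalise $h_{k}:=d_{k}/\|d_{k}\|\to h\in X^{\infty}$, $\|h\|=1$ (using that $X^{\infty}$ is a closed cone), use the positive homogeneity of $(f_{u_{k}})^{\infty}_{q}$ to get $(f_{u_{k}})^{\infty}_{q}(h_{k})\leq 0$, rewrite this via \eqref{equa-5q} as $f^{\infty}_{q}(h_{k})\leq\langle u_{k},h_{k}\rangle$, and pass to the limit using that $f^{\infty}_{q}$ is lsc (since $f$ is lsc) to obtain $f^{\infty}_{q}(h)\leq\lim_{k}\langle u_{k},h_{k}\rangle=0$; then $h\in X^{\infty}\cap K_{q}(f)$ with $\|h\|=1$ contradicts \eqref{equa-CQq}. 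Intersecting the resulting radius with $(0,\alpha]$ yields $\varepsilon$, and for $u\in\mathbb{B}_{\varepsilon}$ the function $f_{u}$ is quasiconvex by Definition~\ref{alpha:robust}, lsc and proper, so Lemma~\ref{lemma-51} gives that $\Sol(u)=\mathrm{argmin}_{X}f_{u}$ is nonempty and compact, which is $(b)$, while the value $f_{u}(x_{u})$ at any $x_{u}\in\Sol(u)$ is a finite lower bound, which is $(a)$; and $(c)$ is obtained as indicated.

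The main point is $(d)$. Suppose $\Sol(\cdot)$ is not usc at $0$; then there are an open set $U\supseteq\Sol(0)$ and sequences $u_{k}\to 0$, $x_{k}\in\Sol(u_{k})\setminus U$, with $\|u_{k}\|\leq\varepsilon$ for large $k$. If $\{x_{k}\}$ were bounded, a subsequential limit would lie in $\Sol(0)\subseteq U$ by $(c)$, contradicting $x_{k}\notin U$ and $U$ open; hence, after extracting a subsequence, $\|x_{k}\|\to\infty$ and $d_{k}:=x_{k}/\|x_{k}\|\to d\in X^{\infty}$ with $\|d\|=1$. I would then show $d\in K_{q}(f)$, contradicting \eqref{equa-CQq}. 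Fix $z\in X$ and $t>0$; for $k$ large put $\lambda_{k}:=t/\|x_{k}-z\|\in(0,1)$ and $w_{k}:=(1-\lambda_{k})z+\lambda_{k}x_{k}=z+t\,\tfrac{x_{k}-z}{\|x_{k}-z\|}\in X$, so that $w_{k}\to z+td$ because $\tfrac{x_{k}-z}{\|x_{k}-z\|}\to d$. Since $\|u_{k}\|\leq\alpha$, $f_{u_{k}}$ is quasiconvex, and since $x_{k}$ minimises $f_{u_{k}}$ over $X\ni z$ we get $f_{u_{k}}(w_{k})\leq\max\{f_{u_{k}}(z),f_{u_{k}}(x_{k})\}=f_{u_{k}}(z)$, that is, $f(w_{k})\leq f(z)+\langle u_{k},w_{k}-z\rangle\leq f(z)+\|u_{k}\|\,t$; letting $k\to\infty$ and using the lower semicontinuity of $f$ gives $f(z+td)\leq\liminf_{k}f(w_{k})\leq f(z)$. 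As $z\in X$, $t>0$ were arbitrary and $z+td\in X$ for $z\in X$ (since $d\in X^{\infty}$ and $X$ is closed and convex), we conclude $(f+\delta_{X})^{\infty}_{q}(d)=\sup_{z\in X}\sup_{t>0}\tfrac{f(z+td)-f(z)}{t}\leq 0$, so $d\in\mathcal{K}_{q}(f+\delta_{X})$ with $d\neq 0$; this contradicts $\mathcal{K}_{q}(f+\delta_{X})=\{0\}$, which is equivalent to \eqref{equa-CQq} by the proof of Lemma~\ref{lemma-51}. This contradiction establishes $(d)$.

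I expect the delicate point to be precisely this last step. The trick used in the proof of Theorem~\ref{theo:sta01}$(e)$ only yields $f^{\infty}(d)\leq 0$, which is too weak under the weaker hypothesis \eqref{equa-CQq}; one must instead recover $f^{\infty}_{q}(d)\leq 0$. This forces the supremum defining the relevant asymptotic quantity to be taken over base points $z\in X$ (so that minimality of $x_{k}$ can be invoked) together with the segment/quasiconvexity step that ``pulls back'' an ascent direction from $x_{k}$ to an arbitrary $z\in X$ — and it is exactly here that the hypothesis $\alpha>0$ is indispensable.
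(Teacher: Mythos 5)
Your proposal follows essentially the same route as the paper: the same contradiction argument (via positive homogeneity, \eqref{equa-5q} and the lower semicontinuity of $f^{\infty}_{q}$) to get $X^{\infty}\cap K_{q}(f_{u})=\{0\}$ for $u$ small, Lemma~\ref{lemma-51} for $(a)$ and $(b)$, and the same quasiconvexity/segment trick $f_{u_k}(w_k)\le\max\{f_{u_k}(z),f_{u_k}(x_k)\}=f_{u_k}(z)$ followed by lower semicontinuity to derive $f(z+td)\le f(z)$ for all $z\in X$, $t>0$ in part $(d)$. Your only (welcome) refinement is to phrase the final contradiction through $\mathcal{K}_{q}(f+\delta_{X})$ rather than asserting $f^{\infty}_{q}(d)\le 0$ outright, which is slightly more precise when $X\subsetneq\mathrm{dom}\,f$; the argument is correct.
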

	
	\begin{proof}
		We first show that there exists $\varepsilon > 0$ such that  the following condition holds  
		\begin{equation}\label{q:assump}
			X^{\infty} \cap K_{q} (f_{u}) = \{0\} \ \ \forall u \in 
			\mathbb{B}_{\varepsilon}.
		\end{equation}
		Indeed, suppose on the contrary that for every $k \in \mathbb{N}$, there 
		exists $u_{k} \in \mathbb{B}_{\frac{1}{k}}$ such that $X^{\infty} \cap 
		K_{q} (f_{u_{k}}) \neq \{0\}$, i.e., there exists $d_{k} \in X^{\infty} 
		\backslash \{0\}$ such that 
		$$(f_{u_{k}})^{\infty}_{q} (d_{k}) \leq 0.$$
		Since $X^{\infty}$ is a closed cone, by passing a subsequence if necessary, we 
		may assume that $\left\{h_{k} := \frac{d_{k}}{\lVert d_{k} \rVert}\right\}_{k}
		\subset X^{\infty}$ converges to $h \in X^{\infty}$ with $\lVert h \rVert = 1$.
		
		For every $k \in \mathbb{N}$, since $f^{\infty}_{q}$ is positively homogeneous of degree one and $(f_{u_{k}})^{\infty}_{q} (h_{k}) \leq 0$, and by using relation \eqref{equa-5q},
		\begin{align*}
			 (f_{u_{k}})^{\infty}_{q} (h_{k}) 	= f^{\infty}_{q} (h_{k}) - \langle u_{k},  h_{k} \rangle\leq 0. 
		\end{align*}
		Since $f$ is lsc, $f^{\infty}_{q}$ is lsc too by \cite[p. 118]{FFB-Vera}, thus
		$$f^{\infty}_{q} (h) \leq \liminf_{k \rightarrow + \infty} f^{\infty}_{q} (h_{k}) \leq 
		\liminf_{k \rightarrow + \infty} \, \langle u_{k},  h_{k} \rangle = 0.$$
		Hence, $h \in X^{\infty} \cap K_{q} (f)$ with $h \neq 0$, a contradiction. 
		Therefore, relation \eqref{q:assump} holds.
		
		Since $X$ is convex and $f$ is $\alpha$-robustly quasiconvex, $f_{u}(x) = f(x) - \langle u, x \rangle$ is quasiconvex for all $u\in\mathbb{B}_\alpha$. Let $\varepsilon$ be satisfied \eqref{q:assump} and $\varepsilon<\alpha$. Hence, $(a)$ and $(b)$ follow from relation
		\eqref{q:assump} and Lemma \ref{lemma-51}. 
		
		The proof of part $(c)$ is quiet similar to the proof of Theorem \ref{theo:sta01}(d), so omitted.
		
		$(d)$: Suppose on the contrary that  $\Sol(\cdot)$ is not usc at $0$. Analysis similar to that in the proof of Theorem \ref{theo:sta01}(e) shows that  there exist   sequences $u_k$ and $x_k$ such that  $u_k\rightarrow 0$,  $x_k \in \Sol(u_k)$ for all $k \in \mathbb{N}$, and the sequence $\{x_k\}$ is unbounded. Without loss of generality, we can assume that $\|x_k\| \to + \infty$ and $d_k := \frac{x_k}{\|x_k\|} \to d \in X^{\infty}$  with $\Vert d \Vert = 1$. 
		Since $u_{k} \to 0$, there exists $k_{1} \in \mathbb{N}$ such that 
		$u_{k} \in \mathbb{B}_{\alpha}$ for all $k \geq k_{1}$. Furthermore, since $\|x_k\| \to + \infty$  for every $t>0$, there exists $k_{2} \in \mathbb{N}$ such that $0 < \frac{t}{\|x_k\|} < 1$ for all $k \geq k_{2}$.
		
		Take any $y \in X$, then $f_{u_k} (x_k) \leq f_{u_k}(y)$ for all $k \in \mathbb{N}$. Since $X$ is convex and $f$ is $\alpha$-robustly quasiconvex, we obtain for every $k \geq k_{0} := \max\{k_{1}, k_{2}\}$ that
		\begin{align*}
			f\left( \left(1 - \frac{t}{\|x_k\|}\right) y + \frac{t}{\|x_k\|} x_{k} \right) - \left\langle u_{k}, \left(1 - \frac{t}{\|x_k\|}\right) y + \frac{t}{\|x_k\|} x_{k}  \right\rangle
			& \leq \max\{f_{u_{k}} (x_{k}),  f_{u_{k}} (y)\} \\
			& = f (y) - \langle u_{k}, y \rangle.
		\end{align*}
		Since $f$ is lsc, we have
		$$f(y + td) - 0 \leq f(y) - 0\ \  \forall ~ t > 0\ \  \forall  y \in X.$$
		This implies 
		$f^{\infty}_{q} (d) \leq 0.$
		Therefore, $d \in X^{\infty} \cap 
		K_{q} (f)$, a contradiction.
	\end{proof}

Furthermore, we also adapt the results for solution stability for the quasiconvex 
case below.
	
\begin{proposition} [The lower semicontinuity of the solution map] \label{prop12q} 
 Let $X$ be a convex set and $f$ be an $\alpha$-robustly quasiconvex ($\alpha 
 > 0$) with $X \subset {\rm dom}\,f$. Then $\Sol(\cdot)$ is lsc at $0$ if and 
 only if the following conditions hold:
 \begin{itemize}
  \item[$(a)$] $\Sol(0)$ is a singleton;
			
  \item[$(b)$] $X^{\infty} \cap K_{q}(f) = \{0\}$. 
 \end{itemize}
\end{proposition}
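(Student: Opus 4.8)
The plan is to mirror, step by step, the proof of Theorem~\ref{theorem12}, replacing the convex ingredients (Theorem~\ref{weak-sharp-Thrm}, Theorem~\ref{theo:sta01} and Lemma~\ref{lemma5}) by their $q$-asymptotic counterparts, namely Proposition~\ref{sta:robust} together with a $q$-version of Lemma~\ref{lemma5} that I would establish along the way. For the \emph{sufficiency} of $(a)$ and $(b)$: write $\Sol(0)=\{\bar x\}$; since $f$ is $\alpha$-robustly quasiconvex and \eqref{equa-CQq} holds, Proposition~\ref{sta:robust} provides $\varepsilon_0>0$ such that $\Sol(u)\neq\emptyset$ for all $u\in\mathbb B_{\varepsilon_0}$ and $\Sol(\cdot)$ is usc at $0$. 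Then, given any open $V$ with $\Sol(0)\cap V\neq\emptyset$, we have $\bar x\in V$, so the upper semicontinuity at $0$ (applied to the open set $V\supset\Sol(0)$) yields $\varepsilon_1>0$ with $\Sol(u)\subset V$ whenever $\|u\|<\varepsilon_1$; taking $\varepsilon:=\min\{\varepsilon_0,\varepsilon_1\}$ we obtain $\emptyset\neq\Sol(u)=\Sol(u)\cap V$ for $\|u\|<\varepsilon$, which is precisely the lower semicontinuity of $\Sol(\cdot)$ at $0$. This is exactly the bookkeeping used in the first half of the proof of Theorem~\ref{theorem12}.

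For the \emph{necessity} of $(a)$: this step uses neither convexity of $f$ nor the $q$-asymptotic function, so it goes through verbatim as in the converse part of Theorem~\ref{theorem12}. If $\Sol(0)$ contained two distinct points $\bar x,\bar y$, I would pick $\lambda$ with $\|\lambda\|=1$ and $-\langle\lambda,\bar x\rangle>-\langle\lambda,\bar y\rangle$, take an open set $U\ni\bar x$ on which $-\langle\lambda,x\rangle>-\langle\lambda,\bar y\rangle$, and note that for every $\varepsilon>0$ and every $x\in U\cap X$, using the optimality $f(x)\geq f(\bar x)=f(\bar y)$,
\[
f_{\varepsilon\lambda}(x)=f(x)-\varepsilon\langle\lambda,x\rangle>f(\bar y)-\varepsilon\langle\lambda,\bar y\rangle=f_{\varepsilon\lambda}(\bar y)
\]
with $\bar y\in X$; hence $\Sol(\varepsilon\lambda)\cap U=\emptyset$ for all $\varepsilon>0$, while $\varepsilon\lambda\to0$ and $\bar x\in\Sol(0)\cap U$, contradicting the lower semicontinuity of $\Sol(\cdot)$ at $0$.

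For the \emph{necessity} of $(b)$ I would first prove the following $q$-analogue of Lemma~\ref{lemma5}: if $X$ is convex with $X\subset\operatorname{dom}f$ and $X^\infty\cap K_q(f)\neq\{0\}$, then there is a sequence $u_k\to0$ with $\Sol(u_k)=\emptyset$. To see it, fix a nonzero $d\in X^\infty$ with $f_q^\infty(d)\leq0$ and set $u_k:=\tfrac1k d$; by \eqref{equa-5q}, $(f_{u_k})_q^\infty(d)=f_q^\infty(d)-\tfrac1k\|d\|^2<0$. Since $f$ is proper, $f_q^\infty$ (hence $(f_{u_k})_q^\infty$) never takes the value $-\infty$, so $(f_{u_k})_q^\infty(d)$ is a strictly negative real number; unwinding the definition of the $q$-asymptotic function at a fixed $x_0\in X\subset\operatorname{dom}f$ gives $f_{u_k}(x_0+td)\leq f_{u_k}(x_0)+t\,(f_{u_k})_q^\infty(d)$ for all $t>0$, and $x_0+td\in X$ for all $t\geq0$ because $X$ is convex and $d\in X^\infty$. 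Letting $t\to+\infty$ forces $\inf_{x\in X}f_{u_k}(x)=-\infty$, whence $\Sol(u_k)=\emptyset$ ($f_{u_k}$ being proper). This contradicts the lower semicontinuity of $\Sol(\cdot)$ at $0$, so $(b)$ must hold.

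The step I expect to be the main obstacle is this last one: one must make sure that the strict inequality $(f_{u_k})_q^\infty(d)<0$ is genuinely exploitable, i.e., that this quantity is a \emph{finite} negative number --- which rests on properness of $f$ ruling out the value $-\infty$ for $f_q^\infty$ --- and then read the definition of $f_q^\infty$ correctly to extract the linear upper bound for $f_{u_k}$ along the ray $t\mapsto x_0+td$. Everything else is a transcription of the proofs of Theorem~\ref{theorem12}, Proposition~\ref{sta:robust} and Lemma~\ref{lemma5}.
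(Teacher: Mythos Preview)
Your proposal is correct and follows essentially the same approach as the paper's proof. The only cosmetic difference is that for the necessity of $(b)$ the paper uses the already-established singleton $\bar x\in\Sol(0)$ as the base point along the ray, whereas you package the argument as a standalone $q$-analogue of Lemma~\ref{lemma5} using an arbitrary $x_0\in X$; both choices yield $\sup_{t>0}\tfrac{f_{u_k}(x_0+td)-f_{u_k}(x_0)}{t}\leq (f_{u_k})_q^\infty(d)<0$ and hence $\Sol(u_k)=\emptyset$, so the proofs are interchangeable.
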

 
\begin{proof}
 The proof of the part ``only if'' is similar to the proof of first one of Theorem 
 \ref{theorem12}, so omitted.
		
 We now assume that $\Sol(\cdot)$ is lsc at $0$. Then, by Theorem 
 \ref{theorem12}, (a) holds true. Assume that  $\Sol(0)=\{\bar x\}$. If (b) 
 does not hold, then there exists $d\in X^\infty\setminus\{0\}$ such that
 $f^\infty_q(d)\leq 0$. Clearly, $\bar x+td\in X$ for all $t>0$. Let $u_k: = 
 \frac{1}{k}d$. Then $u_k\to 0$ as $k\to\infty$ and 
		$$(f_{u_k})^\infty_q(d)=f^\infty_q(d)-\langle u_k, d\rangle=f^\infty_q(d)-\frac{1}{k}\|d\|^2<0.$$
		By definition, we have
		\begin{equation*}
			\sup_{t>0}\frac{f_{u_k}(\bar x+td)-f_{u_k}(\bar x)}{t}<0.   
		\end{equation*}
		This implies that $f_{u_k}(\bar x+td)\to-\infty$ as $t\to\infty$ and $\Sol(u_k)=\emptyset$. Hence, $\Sol(\cdot)$ is not lsc at $0$, a contradiction.
	\end{proof}	
	As a consequence, we have the following corollary.
	
\begin{corollary} [The continuity of the solution mapping] \label{Stability3q}  
 Let $X$ be a convex set and $f$ be an $\alpha$-robustly quasiconvex ($\alpha 
 > 0$) with $X \subset {\rm dom}\,f$. Then $\Sol(\cdot)$  is continuous at $0$ 
 if and only if   conditions $(a)$ and $(b)$ in Proposition \ref{prop12q} hold.	
\end{corollary}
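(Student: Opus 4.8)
The plan is to obtain the corollary by merely assembling the two preceding results, since by definition $\Sol(\cdot)$ is continuous at $0$ precisely when it is both upper and lower semicontinuous at $0$. Note first that the standing hypotheses here---namely $X$ convex, $f$ an $\alpha$-robustly quasiconvex function with $\alpha>0$, and $X\subset{\rm dom}\,f$---are exactly those imposed both in Proposition \ref{sta:robust} and in Proposition \ref{prop12q}, so no compatibility check beyond this is needed.

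For the ``if'' direction, I would assume that $(a)$ and $(b)$ hold. Condition $(b)$, namely $X^{\infty}\cap K_{q}(f)=\{0\}$, is exactly the assumption \eqref{equa-CQq} of Proposition \ref{sta:robust}; hence part $(d)$ of that proposition yields that $\Sol(\cdot)$ is usc at $0$. Simultaneously, $(a)$ and $(b)$ are precisely the sufficient conditions in Proposition \ref{prop12q}, which gives that $\Sol(\cdot)$ is lsc at $0$. Combining the two, $\Sol(\cdot)$ is continuous at $0$.

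For the ``only if'' direction, I would assume that $\Sol(\cdot)$ is continuous at $0$; in particular it is lsc at $0$, and then the necessity part of Proposition \ref{prop12q} forces both $(a)$ and $(b)$ to hold. The argument presents no real obstacle: all the substantive work has already been carried out in Propositions \ref{sta:robust} and \ref{prop12q}, and the corollary is essentially their conjunction, with the only point worth stating explicitly being that the hypotheses of the two propositions coincide, so they may be invoked simultaneously.
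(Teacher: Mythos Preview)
Your proposal is correct and is exactly the assembly the paper intends: the corollary is stated without proof as an immediate consequence of Propositions \ref{sta:robust} and \ref{prop12q}, and you have spelled out precisely that combination. No modification is needed.
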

	
Finally, we ensure the continuity of the value function $\mu$ at $0$ when $f$ 
is proper, lsc and $\alpha$-robustly quasiconvex $(\alpha > 0)$.

\begin{proposition}[The continuity of the optimal value function] \label{theorem22q}
 Let $X$ be a convex set and $f$ be an $\alpha$-robustly quasiconvex ($\alpha 
 > 0$) with $X \subset {\rm dom}\,f$. If $X^{\infty} \cap{K}_{q} (f)=\{0\}$, 
 then $\mu$ is  continuous at $0$.
\end{proposition}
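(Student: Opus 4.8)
The plan is to establish that $\mu$ is continuous at $0$ by proving upper and lower semicontinuity separately, following the scheme of Theorem~\ref{theorem22} but with the convex asymptotic tools replaced by their $q$-asymptotic analogues supplied by Proposition~\ref{sta:robust}. For upper semicontinuity nothing new is needed: the proof of Theorem~\ref{theorem22}(a) uses only that $X\neq\emptyset$ (so $\mu(0)<+\infty$) and the elementary estimate $\mu(u_k)\le f_{u_k}(x_\ell)=f(x_\ell)-\langle u_k,x_\ell\rangle$ for a minimizing sequence $x_\ell\in X$ of $f$; letting $k\to\infty$ and then $\ell\to\infty$ gives $\limsup_{u\to0}\mu(u)\le\mu(0)$, with no convexity entering. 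So I would simply invoke (the proof of) Theorem~\ref{theorem22}(a).

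For lower semicontinuity I would first reuse the opening of the proof of Proposition~\ref{sta:robust}: by relation~\eqref{equa-5q} and the lower semicontinuity of $f^\infty_q$ there is $\varepsilon\in(0,\alpha)$ such that $X^\infty\cap K_q(f_u)=\{0\}$ and $f_u$ is quasiconvex for every $u\in\mathbb{B}_\varepsilon$; Lemma~\ref{lemma-51} applied to $f_u$ then makes $\Sol(u)$ nonempty and compact on $\mathbb{B}_\varepsilon$ (in particular $\mu(0)$ is finite), while Proposition~\ref{sta:robust}(c)(d) give $\Limsup_{u\to0}\Sol(u)\subset\Sol(0)$ and usc of $\Sol(\cdot)$ at $0$. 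Now suppose, for contradiction, that $\liminf_{u\to0}\mu(u)<\mu(0)$, so there are $u_k\to0$ and $\beta<\mu(0)$ with $\mu(u_k)\le\beta$ for all $k$ (after passing to a subsequence); for $k$ large $u_k\in\mathbb{B}_\varepsilon$, so I may pick $x_k\in\Sol(u_k)$ with $f_{u_k}(x_k)=\mu(u_k)\le\beta$. Since $\Sol(0)$ is compact, choosing a bounded open $V\supset\Sol(0)$ and using usc of $\Sol(\cdot)$ at $0$ forces $\Sol(u_k)\subset V$ for $k$ large, hence $\{x_k\}$ is bounded; along a convergent subsequence $x_k\to\hat x$, Proposition~\ref{sta:robust}(c) gives $\hat x\in\Sol(0)$ and thus $f(\hat x)=\mu(0)$. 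Since $\{x_k\}$ is bounded and $u_k\to0$ we have $\langle u_k,x_k\rangle\to0$, so the lower semicontinuity of $f$ yields
\[
\mu(0)=f(\hat x)\le\liminf_{k\to\infty}f(x_k)=\liminf_{k\to\infty}\big(f(x_k)-\langle u_k,x_k\rangle\big)=\liminf_{k\to\infty}\mu(u_k)\le\beta<\mu(0),
\]
a contradiction. Hence $\mu$ is lsc at $0$, and combined with the upper semicontinuity it is continuous at $0$.

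The only step with any content is the boundedness of the sequence $\{x_k\}$ of perturbed minimizers, which above I obtain for free from the usc of $\Sol(\cdot)$ and the compactness of $\Sol(0)$ already proved in Proposition~\ref{sta:robust}. If a self-contained argument is preferred, one can instead imitate the proof of Theorem~\ref{theorem22}(b): assuming $\|x_k\|\to\infty$, set $d_k:=x_k/\|x_k\|\to d\in X^\infty$ with $\|d\|=1$; once $u_k\in\mathbb{B}_\alpha$ and $t/\|x_k\|<1$, quasiconvexity of $f_{u_k}$ on the segment between $y\in X$ and $x_k$ with ratio $t/\|x_k\|$, together with $f_{u_k}(x_k)\le f_{u_k}(y)$, gives in the limit $k\to\infty$ — using $(1-t/\|x_k\|)y+(t/\|x_k\|)x_k\to y+td$, $u_k\to0$ and the lower semicontinuity of $f$ — that $f(y+td)\le f(y)$ for all $t>0$ and $y\in X$, so $f^\infty_q(d)\le0$ and $d\in X^\infty\cap K_q(f)$, contradicting \eqref{equa-CQq}. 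This is precisely the delicate limit passage already performed in Proposition~\ref{sta:robust}(d), and it is the one place where $\alpha$-robust quasiconvexity is genuinely used.
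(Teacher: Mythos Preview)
Your proof is correct. The upper semicontinuity part and the overall contradiction scheme for lower semicontinuity match the paper's argument. The one genuine difference is how you obtain boundedness of the sequence $\{x_k\}$ of perturbed minimizers: you deduce it from the upper semicontinuity of $\Sol(\cdot)$ at $0$ together with the compactness of $\Sol(0)$ (both furnished by Proposition~\ref{sta:robust}), whereas the paper re-runs the direct asymptotic argument---assuming $\|x_k\|\to\infty$, extracting $d\in X^\infty$ with $\|d\|=1$, and using $\alpha$-robust quasiconvexity along segments to force $f^\infty_q(d)\le 0$. Your route is a bit more economical since it recycles a result already proved; the paper's route is more self-contained for this particular proposition. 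You anticipate this: the alternative argument you sketch in your final paragraph is exactly the one the paper uses.
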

	
	\begin{proof}
		We repeat the proof of Theorem \ref{theorem22} until relation \eqref{42}. Hence, we need to show
		that the sequence $\{x_k\}_{k \geq k_1}$ is bounded, where $x_k\in\Sol(u_k)$ for all $k\geq k_1$ with $u_k$ being an arbitrary sequence converging to $0$. Indeed, if $\{x_k\}_{k\geq k_1}$  is unbounded, then, without loss of generality, we assume that $\Vert x_k \Vert \to +\infty$. Hence, 
		$\{\frac{x_k}{\lVert x_{k} \rVert}\}_{k}$ is bounded. Passing a subsequence if needed, $\frac{x_k}{\lVert x_{k} \rVert} \rightarrow d \in X^{\infty}$  with $\Vert d \Vert =1$. 
		
		Since $u_{k} \to 0$, there exists $k_{2} \in \mathbb{N}$ such that 
		$u_{k} \in \mathbb{B}_{\alpha}$ for all $k \geq k_{2}$. Furthermore, since $\|x_k\| \to + \infty$, for every $t>0$, there exists $k_{3} \in \mathbb{N}$ such that $0 < \frac{t}{\|x_k\|} < 1$ for all $k \geq k_{3}$.
		Take any $y \in X$. Since $x_{k} \in \Sol(u_{k})$, $f_{u_k} (x_k) \leq f_{u_k}(y)$ for all $k \in \mathbb{N}$. Since $X$ is convex and $f$ is $\alpha$-robustly quasiconvex, we have $f^{\infty}_{q} (d) \leq 0$ by the same analysis than Proposition \ref{sta:robust}. Hence, $d \in X^{\infty} \cap 
		K_{q} (f)$, a contradiction. 
		
		Therefore, $\{x_k\}_{k}$ is bounded and the rest of the proof follows as in Theorem \ref{theorem22}. We note here that the upper semicontinuity of $\mu$ at $0$ does not depend on the assumptions on the function $f$ and $X$.
	\end{proof}

\section{Conclusions}

We contributed to the discussion on the analysis of optimization problems at 
infinity by studying sufficient conditions for weak sharp minima and solution 
stability in the general nonconvex case and also in the quasiconvex case. 

By using the usual tools from (generalized) asymptotic analysis, we proved 
that this classical approach is finer than the one from the variational analysis
at infinity, specially for weak sharp minima since the sufficient conditions based 
on asymptotic tools are weaker than the ones based on variational analysis 
at infinity for the whole class of proper, lsc and convex functions (and also for 
the quasiconvex ones).

In this sense, we strongly believe that the valuable efforts for obtaining 
useful information for optimization problems from the infinity could be 
improved, and this improvements should be based on adding a ``direction 
term'' in   Definitions \ref{def31} and \ref{def41}, that is, by defining the
{\it normal cone of the set $X$ at the infinity in the direction $v$} and its 
respectively {\it limiting subdifferential of $f$ at the infinity in the direction 
$v$}. This will be a matter of a subsequent work.

We hope that our results could provide new lights for further developments 
on asymptotic analysis and variational analysis at infinity.
\bigskip

\noindent \textbf{Acknowledgements} A part of this paper was completed when the  authors were visiting the 
 Vietnam Institute for Advanced Study in Mathematics (VIASM) in 2024. The 
 authors would like to thank VIASM for their financial support and hospitality. 
 \medskip
 
 \noindent \textbf{Funding} The first author was supported by  ANID--Chile under project Fondecyt  Regular 1241040. The second and the third authors were supported by  Vietnam National Foundation for Science and
 Technology Development (NAFOSTED) under grant number 101.01-2023.23. 
 \medskip
  
 \noindent \textbf{Declarations} 
 \\
 \noindent{\bf Data availability} The authors confirm that the data supporting the findings of this study are available within the article.
 \\
 {\bf Conflict of interest} The authors declare that they have no conflict of interest.

	\bibliographystyle{amsplain}
	%\bibliography{D:/Submission/BibPureMath1,D:/Submission/BibAppMath1}

\end{document}